\DeclareSymbolFont{largesymbolsTXA}{U}{ntxexa}{m}{n}
\DeclareMathSymbol{\fintop}{\mathop}{largesymbolsTXA}{62}
\newcommand{\RR}{\mathbb{R}}
\renewcommand{\SS}{\mathbb{S}}
\newcommand{\al}{\alpha}
\newcommand{\be}{\beta}
\newcommand{\ep}{\epsilon}
\newcommand{\la}{\lambda}
\newcommand{\om}{\omega}
\newcommand{\Si}{\Sigma}
\newcommand{\pa}{\partial}
\newcommand{\proofofmaintheorem}{\par{\noindent\textit{Proof of Theorem 2.3. }}}
\newtheorem{theorem}{Theorem}[section]
\newtheorem{lemma}[theorem]{Lemma}
\newtheorem{remark}[theorem]{Remark}
\newtheorem{proposition}[theorem]{Proposition}
\newtheorem{corollary}[theorem]{Corollary}
\newtheorem{conjecture}[theorem]{Conjecture}
\title{On the Existence of Stable of Unduloids of Dimension Eight}
\author{David Hartley}
\address{University of Wollongong, Northfields Avenue, Wollongong 2522}
\email{hartleyd@uow.edu.au}
\date{\today}
\begin{document}

\begin{abstract}
In this paper we study the stability of $n$-dimensional constant mean curvature unduloids embedded in slabs in $\RR^{n+1}$. We prove that among the family of half period unduloids stability is determined by whether the volume is increasing or decreasing along this family provided some conditions on the volume function are met. We then use this theorem to prove the existence of stable unduloids of dimension eight.
\end{abstract}

\maketitle

\section{Introduction}\label{SecIntro}
We consider the isoperimetric problem for hypersurfaces, $\Si$, embedded in a slab, $M=[0,d]\times\RR^n\subset\RR^{n+1}$, such that $\pa\Si\subset\pa M$. The free boundary critical points of the area function under a volume constraint are the constant mean curvature (CMC) hypersurfaces that meet $\pa M$ orthogonally (we will refer to this as the free boundary condition). It is further known that these hypersurfaces must be rotationally symmetric, \cite{Athanassenas87,Pedrosa99}, thus they are the Delaunay hypersurfaces, consisting of catenoids, spheres, cylinders, unduloids, and nodoids \cite{Delaunay41,Hsiang81}. We will reject the catenoids and nodoids as the former cannot satisfy the boundary conditions, while the later can only satisfy them after leaving the slab.

To investigate whether these hypersurfaces minimise the area (locally under a volume constraint), we introduce the concept of stability. A constant mean curvature hypersurface $\Si\subset\RR^{n+1}$ is stable if the functional
\begin{equation}\label{StabilityGen}
J_{\Si}(u)=\frac{\int \|\nabla u\|_{\Si}^2-|A|^2u^2\,d\mu_{\Si}}{\int u^2\,d\mu_{\Si}}
\end{equation}
is non-negative for all functions $u$ on $\Si$ such that $\int u\,d\mu_{\Si}=0$, where $|A|$ is the magnitude of the second fundamental form of $\Si$. When the functional is non-negative for all functions $u$, we call the hypersurface strictly stable. This functional is the second variation of area under the volume constraint and as such any solution to the isoperimetric problem is stable.

Spheres are known to be stable hypersurfaces \cite{Barbosa84} and, in fact, the only stable compact orientable immersion, while half spheres in the slab are also stable by the same reasoning \cite{Athanassenas87}. A cylinder is stable if and only if its radius is greater than or equal to $\frac{d\sqrt{n-1}}{\pi}$, this was proved in the $n=2$ case in \cite{Athanassenas87,Vogel87} and in general dimensions in \cite{Hartley13,Souam18}, with the former paper doing so in with respect to the stronger condition of dynamic stability of the volume preserving mean curvature flow. The stability of unduloids is a more complicated topic. Pedrosa and Ritor\'e, \cite{Pedrosa99}, proved that if $2\leq n\leq 7$ then all unduloids are unstable (they also proved that nodoids are unstable), and that the near spherical unduloids are unstable. However, they also showed in that paper that if $n\geq9$ there exist stable unduloids. Their proof of existence was based on a comparison of the area of a degenerate half sphere (where its apex touches a boundary of the slab) with the area of the cylinder of the same volume. If $n\geq9$ then the area of the half sphere is smaller, meaning the cylinder does not solve the isoperimetric problem at this volume. Since the half sphere is degenerate it does not either, hence an unduloid must solve the isoperimetric problem at this volume and hence be stable. More recent results \cite{Hartley16,Li18} considered the near cylindrical unduloids and showed that the near cylindrical unduloids of half period are stable if and only if $n\geq11$, with the former paper again doing so in the context of dynamic stability. Any unduloid with over a half a period is unstable, as it is no longer a graph over a boundary component of $S$.

These results have two major gaps. Do their exist stable unduloids of dimension eight? And what characteristic determines the stability of unduloids? In this paper we provide a positive answer to the first question, while giving an answer to the second subject to some conditions.

The paper is organised as follows. In Section \ref{SecFamily} we introduce a family of unduloids and with this are able to state our main theorem. In Section \ref{SecStable} we consider the functional (\ref{StabilityGen}) and recast the concept of stability in terms of an operator. In Section \ref{SecNull} we examine the null space of this operator and in Section \ref{SecVary} we consider how a zero eigenvalue will vary along the family of unduloids, this allows us to prove the main theorem. Sections \ref{SecFamilyApp}, \ref{SecStableApp}, and \ref{PsiSec} contain the details of some technical calculations.

\textbf{Acknowledgement}: The author would like to thank Professor Frank Morgan for his question regarding the isoperimetric problem, which prompted the inclusion of Remark \ref{MorganRem}.


\section{A Family of Rotationally Symmetric CMC Hypersurfaces}\label{SecFamily}
In this section we give a representation of unduloids in terms of a profile curve and use the characteristics of this family to state the main stability theorem. We start by defining a couple of intermediate functions that will appear in our family. The first is a function of the parameter that is used to set the period of the hypersurface:
\[
Q(t):=\left\{\begin{array}{cc}\frac{1-t^{n-1}}{1-t^n}, & t\in\RR_+\backslash\{1\},\\ \frac{n-1}n, & t=1,\end{array}\right.
\]
note that $Q$ is continuously differentiable and strictly decreasing for $t>0$ with
\[
Q'(t)=\left\{\begin{array}{cc}-\frac{t^{n-2}\left(t^n-nt+n-1\right)}{\left(1-t^n\right)^2}, & t\in\RR_+\backslash\{1\},\\ -\frac{n-1}{2n}, & t=1,\end{array}\right.
\]
and $Q(t^{-1})=tQ(t)$, so $tQ(t)$ is strictly increasing. The second function we define is the gradient of the profile curve:
\[
R(x;t):=\left\{\begin{array}{cc}\frac{1}{|1-t|}\sqrt{\left(\frac{(1-(1-t)x)^{n-1}}{1-Q(t)+Q(t)(1-(1-t)x)^n}\right)^2-1}, & t\in\RR_+\backslash\{1\},\\ \sqrt{(n-1)(1-x)x}, & t=1,\end{array}\right.
\]
where $0\leq x\leq1$. We again note that this function is continuous for $t>0$ and satisfies $R(0;t)=R(1;t)=0$ for all $t>0$, along with $R(1-x;t^{-1})=tR(x;t)$. From these functions we further define
\begin{equation}\label{Defzeta}
\zeta(x;t):=\left\{\begin{array}{cc}\int_x^1R(\tilde{x};t)^{-1}\,d\tilde{x}, & 0\leq x<1,\\ 0, & x=1,\end{array}\right.
\end{equation}
\begin{equation}\label{DefP}
P(t):=\zeta(0;t)=\int_0^1R(x;t)^{-1}\,dx,
\end{equation}
and let $\zeta^{-1}(y;t)$ be the inverse of $\zeta$ with respect to its first variable, that is $\zeta^{-1}(\zeta(x;t);t)=x$ for all $x$ and $t$. Also note that $P(t^{-1})=t^{-1}P(t)$. Throughout the paper we will use subscripts to denote differentiation of functions with respect to that variable.

We can now define the family of CMC hypersurfaces, we save the calculations for Section \ref{SecFamilyApp}.
\begin{proposition}\label{FamCMC}
The two parameter family of functions
\[
u(z;r,t):=\frac{Q(r)d}{P(t)Q(t)}\left(1-(1-r)\zeta^{-1}\left(\frac{Q(t)P(t)z}{Q(r)d};r\right)\right),
\]
for $r,t>0$ and $z\in\left[0,\frac{Q(r)P(r)d}{Q(t)P(t)}\right]$ define a two parameter family of rotationally symmetric CMC hypersurfaces with mean curvature
\[
\eta(t):=\frac{nQ(t)P(t)}{d},
\]
and such that
\[
u_z(0;r,t)=u_z\left(\frac{Q(r)P(r)d}{Q(t)P(t)};r,t\right)=0.
\]
\end{proposition}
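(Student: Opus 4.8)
Since each member of the family is, by construction, the hypersurface of revolution with profile curve $z\mapsto\bigl(z,u(z;r,t)\bigr)$, rotational symmetry is automatic and the substantive claims are that this profile solves the constant mean curvature equation with curvature $\eta(t)$ and that $u_z$ vanishes at the two ends of its interval of definition. My plan is to verify both through the first integral of the CMC equation rather than the (second order) equation itself. For a profile $u=u(z)$ the mean curvature — in the convention under which the round sphere of radius $\rho$ has mean curvature $n/\rho$ — is $H=-u_{zz}(1+u_z^2)^{-3/2}+(n-1)u^{-1}(1+u_z^2)^{-1/2}$, and a short computation gives $\frac{d}{dz}\bigl(u^{n-1}(1+u_z^2)^{-1/2}\bigr)=H\,u^{n-1}u_z$. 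Hence a profile of constant mean curvature $H$ satisfies $u^{n-1}(1+u_z^2)^{-1/2}-\tfrac Hn\,u^n=c$ for a constant $c$; conversely, a curve obeying this first order relation with $u_z$ of a single sign, vanishing only at the endpoints (which will be the case here), is a CMC profile of curvature $H$ — differentiating recovers the second order equation where $u_z\neq0$, and the endpoints follow by continuity. So it suffices to produce constants $H$ and $c$ making $u(\,\cdot\,;r,t)$ satisfy this identity, and to read off $H$.

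The reduction step is the following change of variables. Put $\xi:=1-(1-r)\zeta^{-1}\!\bigl(\tfrac{Q(t)P(t)z}{Q(r)d};r\bigr)$, so that $u=\tfrac{Q(r)d}{P(t)Q(t)}\xi$; using $\zeta(1;r)=0$, $\zeta(0;r)=P(r)$ and the fact that $\zeta(\,\cdot\,;r)$ is a continuous strictly decreasing bijection of $[0,1]$ onto $[0,P(r)]$ (its $x$-derivative is $-R^{-1}$, and $R^{-1}$ has only an integrable endpoint singularity), one sees that $\xi$ runs over the interval with endpoints $r$ and $1$ as $z$ runs over $\bigl[0,\tfrac{Q(r)P(r)d}{Q(t)P(t)}\bigr]$ (which is exactly the range of $z$ for which the argument of $\zeta^{-1}$ lies in its domain). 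Differentiating $u$ through $\zeta^{-1}$ and using $\zeta_x=-R^{-1}$ gives $u_z=(1-r)R\bigl(\tfrac{1-\xi}{1-r};r\bigr)$; since $1-(1-r)\tfrac{1-\xi}{1-r}=\xi$, feeding in the explicit formula for $R$ collapses, after cancellation, to
\[
1+u_z^2=\left(\frac{\xi^{n-1}}{1-Q(r)+Q(r)\xi^n}\right)^{\!2},\qquad\text{so}\qquad (1+u_z^2)^{-1/2}=\frac{1-Q(r)+Q(r)\xi^n}{\xi^{n-1}},
\]
the denominator being positive on the range in play (this is already implicit in the paper's assertion that $R$ is well defined).

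With $\lambda:=\tfrac{Q(r)d}{P(t)Q(t)}$ one has $u=\lambda\xi$ and $\tfrac{\eta(t)}{n}=\tfrac{Q(t)P(t)}{d}=\tfrac{Q(r)}{\lambda}$, whence $u^{n-1}(1+u_z^2)^{-1/2}-\tfrac{\eta(t)}{n}u^n=\lambda^{n-1}\bigl(1-Q(r)+Q(r)\xi^n\bigr)-Q(r)\lambda^{n-1}\xi^n=\lambda^{n-1}(1-Q(r))$, which is independent of $z$; by the first paragraph this says precisely that $u(\,\cdot\,;r,t)$ is a CMC profile with mean curvature $\eta(t)$. The boundary derivatives are then immediate: at $z=0$ we have $\zeta^{-1}(0;r)=1$ and at $z=\tfrac{Q(r)P(r)d}{Q(t)P(t)}$ we have $\zeta^{-1}(P(r);r)=0$, so $\tfrac{1-\xi}{1-r}$ equals $1$ and $0$ there, and $R(1;r)=R(0;r)=0$ forces $u_z=0$; the degenerate cases $t=1$ or $r=1$, where $Q$ and $R$ are prescribed by their limiting formulas, are handled either by rerunning the computation or by continuity in the parameters.

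The only genuinely laborious point is the reduction in the second paragraph: differentiating cleanly through the inverse function $\zeta^{-1}$ and then simplifying the rather opaque expression defining $R$ down to the clean square $\bigl(\xi^{n-1}/(1-Q(r)+Q(r)\xi^n)\bigr)^2$, all while tracking the signs — notably the $|1-t|$ versus $1-t$ issue, which flips the sign of $u_z$ according to whether $r<1$ or $r>1$ but leaves $1+u_z^2$ unaffected, and the choice of positive square root for $(1+u_z^2)^{-1/2}$. Once that identity is secured, the constancy check and the endpoint conditions are each one line.
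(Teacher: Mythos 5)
Your proof is correct, but it takes a genuinely different route from the paper's. The paper (Section \ref{SecFamilyApp}) verifies the second-order equation head on: it differentiates $S(x;r)=\frac{(1-(1-r)x)^{n-1}}{1-Q(r)+Q(r)(1-(1-r)x)^n}$, obtains $R_x$, computes $u_{zz}$ by the chain rule, and checks pointwise that $H(u)=\eta(t)$, with $r=1$ dispatched by noting $u$ is constant there. You stop at the first derivative: from $u_z=(1-r)R$ and the definition of $R$ you extract the identity $1+u_z^2=S(x;r)^2$ (with $x=\zeta^{-1}\!\left(\frac{Q(t)P(t)z}{Q(r)d};r\right)$), and then observe that the classical Delaunay first integral $u^{n-1}(1+u_z^2)^{-1/2}-\tfrac{\eta(t)}{n}u^n$ is constant in $z$, recovering $H(u)\equiv\eta(t)$ by differentiating wherever $u_z\neq0$. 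What this buys is a shorter computation (no $R_x$, no $u_{zz}$) and it makes the conserved quantity and the endpoint conditions transparent; what it costs is the converse step, which needs $u_z$ to vanish only at the endpoints (true, since $S>1$ on the open interval, which is exactly the positivity of $R$ there) and continuity of $H(u(\cdot))$ up to those endpoints, i.e.\ $C^2$ regularity of the profile on the closed interval. That regularity does hold: the first integral expresses $u_z^2$ as a smooth function of $u$, so on the open interval $u_{zz}$ coincides with a continuous function of $u$ and hence extends continuously to the endpoints; your brief appeal to continuity is therefore justified, and is no less rigorous than the paper's own chain-rule computation, in which the endpoint singularity of $R_x$ is silently compensated by the vanishing factor $R$ in $u_{zz}$. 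One small point: $t=1$ is not actually a degenerate case, since $t$ enters $u$ only through the positive scalar $Q(t)P(t)$; only $r=1$ needs separate (and trivial) treatment, exactly as in the paper.
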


We will consider the finite unduloids of length $d$ and fixed period $2d$, these are given by the profile curve
\[
v(z;t):=u(z;t,t)=\frac{d}{P(t)}\left(1-(1-t)\zeta^{-1}\left(\frac{P(t)z}{d};t\right)\right),
\]
have a mean curvature $\eta(t)$, and have an $(n+1)$-enclosed volume $V(t):=Vol(v(\cdot;t))$.

\begin{remark}
Note that $\eta(t^{-1})=\eta(t)$. It was this function that was used in \cite{Pedrosa99} to prove their instability results, although they used a different parameterisation and considered the family of with the same mean curvature (not same period). In the notation of this paper, they proved that if $\eta'(t)>0$ for some $t\in(0,1)$ then $v(\cdot;t)$ defines an unstable unduloid.
\end{remark}

The main theorem of this paper is:
\begin{theorem}\label{stability}
Let $n\geq 2$ be such that if $t\in(0,1)$ is such that $V'(t)=0$ then $V''(t)\neq0$ and $\eta'(t)<0$. Let $t_0\in(0,1]$, if $V'(t_0)<0$ the CMC unduloid defined by $v(\cdot;t_0)$ is unstable. While when $V'(t_0)>0$ the CMC unduloid defined by $v(\cdot;t_0)$ is stable.
\end{theorem}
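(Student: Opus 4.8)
The plan is to reduce stability to an eigenvalue problem for the Jacobi operator $L = \Delta_\Sigma + |A|^2$ acting on functions with mean zero, and then track how the bottom of the relevant spectrum moves along the family $v(\cdot;t)$. First I would use the rotational symmetry and the free boundary/Neumann condition to decompose $L^2(\Sigma)$ into Fourier modes in the angular variable; the zeroth mode governs stability (higher modes are handled separately and are expected to be harmless here, in contrast to the over-half-period case). On the zeroth mode, stability is equivalent to the first eigenvalue of $L$ (with Neumann conditions at the two boundary circles) on the orthogonal complement of constants being $\leq 0$, equivalently to the second eigenvalue $\lambda_2$ of $L$ on all functions being $\leq 0$. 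By the oscillation/Sturm theory for the associated ODE, $\lambda_2 \leq 0$ is in turn controlled by whether a specific Jacobi field changes sign, and that Jacobi field is exactly the variation of the profile within the family — this is where the hypotheses $\eta'(t)<0$ and the nondegeneracy of critical points of $V$ enter.

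The key steps, in order: (i) Set up the operator formulation from Section \ref{SecStable}, record that stability of $v(\cdot;t_0)$ is equivalent to $\lambda_2(L_{t_0}) \leq 0$ on the zeroth mode and check the higher modes cause no instability for half-period unduloids. (ii) Using the null space analysis of Section \ref{SecNull}, identify the Jacobi fields: the translation field (normal component of $\partial_z$) gives a solution of $Lw=0$, and differentiating the family in $t$ gives a field whose failure to satisfy the boundary conditions is measured by $\eta'(t)$ — so at a fixed $t_0$ with $\eta'(t_0)\neq 0$ the kernel of $L_{t_0}$ (zeroth mode, Neumann) is trivial, hence $\lambda_2(t_0)\neq 0$ and stability is an open condition in $t$. (iii) Compute the sign of $\lambda_2(t)$ for $t$ near $1$: the cylinder $t=1$ is the degenerate case, and a perturbation calculation (or the known results of \cite{Hartley16,Li18,Pedrosa99}) pins down the sign of $\lambda_2$ on one side, giving a base point. (iv) Use Section \ref{SecVary}: derive the variational formula $\frac{d}{dt}\lambda_2(t) = c(t)\, V'(t)$ for some function $c(t)$ of definite sign (coming from pairing the eigenfunction against $\partial_t v$ and integrating by parts, with $V'(t)$ emerging as the boundary term since the enclosed volume is the natural conjugate quantity to the Lagrange multiplier). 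Then $\lambda_2(t)$ can change sign only where $V'(t)=0$, and at such points $V''(t)\neq 0$ together with $\eta'(t)<0$ forces the sign change to be in the direction that makes $\lambda_2 \leq 0 \iff V'(t) > 0$. Combining (iii) and (iv), conclude: $V'(t_0)>0 \Rightarrow \lambda_2(t_0)\leq 0 \Rightarrow$ stable, and $V'(t_0)<0 \Rightarrow \lambda_2(t_0)>0 \Rightarrow$ unstable.

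The main obstacle I expect is step (iv): establishing that $\frac{d}{dt}\lambda_2(t)$ has the same sign as (a fixed-sign multiple of) $V'(t)$, and — crucially — getting the constant $c(t)$ with the correct sign so that the roles of ``stable'' and ``unstable'' are not interchanged. This requires a careful Hadamard-type first-variation computation for the eigenvalue $\lambda_2(t)$ along the family, keeping precise track of the moving domain, the $t$-dependence of the metric and of $|A|^2$, and the $t$-dependence of the Neumann boundary condition; the hypothesis $\eta'(t)<0$ is what ties the sign of the boundary contribution to the sign of $V'$. A secondary technical point is handling the endpoint $t_0=1$ (the cylinder), where the family degenerates and the ODE coefficients must be interpreted via the limits recorded in Section \ref{SecFamily}; and one must also confirm that no nonzero Fourier mode ever produces instability for a half-period unduloid, so that the zeroth-mode analysis is decisive.
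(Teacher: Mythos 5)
Your overall architecture (reduce to the rotationally symmetric constrained eigenvalue problem, show the kernel can only appear at special values of $t$, track the sign of the crossing eigenvalue using the hypotheses, anchor at the cylinder $t=1$ via \cite{Hartley16,Li18}, and propagate along the family) is the right shape, but two of your key steps are genuinely wrong as stated. First, in step (i) stability is \emph{not} equivalent to the second Neumann eigenvalue of the unconstrained Jacobi operator $L=\Delta_\Sigma+|A|^2$ being $\leq0$: in the regime of interest the unconstrained operator has exactly one negative direction, and whether that direction can be combined with others inside the mean-zero (volume-preserving) subspace is precisely the delicate point; this is why the paper works from the start with the projected operator $A(v)$ on $X=\{w:\int w\,v^{n-1}dz=0\}$ (Proposition \ref{EquivStable}), whose eigenvalues are the decisive constrained quantities. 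Your step (ii) inherits this confusion: the correct statement (Corollary \ref{NullA}) is that the kernel of the \emph{constrained} operator $\tilde A(t_0)$ is nontrivial if and only if $V'(t_0)=0$, in which case it is spanned by $v_t(\cdot;t_0)$; it is not governed by $\eta'(t_0)\neq0$, and the Jacobi fields $u_r,u_z,u_t$ must be combined with the Neumann conditions \emph{and} the weighted mean-zero condition to see this.

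Second, and more seriously, the formula you propose in step (iv), $\frac{d}{dt}\lambda_2(t)=c(t)\,V'(t)$ with $c(t)$ of definite sign, cannot be correct: it would force the critical eigenvalue to have vanishing derivative exactly at the points $V'(t_0)=0$ where it touches zero, so the crossing would be degenerate and no sign change could be deduced; it also leaves the hypothesis $V''(t_0)\neq0$ with no role. The actual mechanism (Lemma \ref{Dla}) is a perturbation formula valid \emph{at} a crossing point $t_0$ with $V'(t_0)=0$, namely $\la'(t_0)=-V''(t_0)\eta'(t_0)\big/\bigl(n\om_n\int_0^d v_t(z;t_0)^2v(z;t_0)^{n-1}dz\bigr)$, obtained by differentiating the eigenvalue equation for the projected operator along the family (this requires the simplicity of the zero eigenvalue and a Kielh\"ofer-type smooth-dependence argument, plus the Kato continuity fact that eigenvalues can only change sign through $0$). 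With $\eta'(t_0)<0$ and $V''(t_0)\neq0$ this says the critical eigenvalue crosses upward at nondegenerate local minima of $V$ and downward at local maxima; the theorem then follows by an interval-by-interval alternation argument between consecutive critical points of $V$, started from the known behaviour just below $t=1$ ($2\le n\le10$: one negative eigenvalue and $V'<0$; $n\ge11$: all eigenvalues positive and $V'>0$), together with the reduction to rotationally symmetric test functions as in \cite{Vogel87}. Without a correct replacement for your step (iv), the proposal does not yield the stated equivalence between the sign of $V'$ and stability.
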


\begin{remark}
This also covers the $t_0>1$ case by symmetry of the hypersurfaces under the transformation $t\to t^{-1}$. That is, if $t_0>1$ and $V'(t_0)<0$, then the CMC unduloid defined by $v(z;t_0)$ is stable, while if $V'(t_0)>0$ it is unstable.
\end{remark}

\begin{remark}
It is conjectured that the conditions $V''(t)\neq0$ and $\eta'(t)<0$ whenever $t\in(0,1)$ is such that $V'(t)=0$, are satisfied in all dimensions $n\geq2$, however the relevant integral bounds have not been obtained. In fact, we have the following conjecture which is stronger than the second condition.
\end{remark}

\begin{conjecture}
The function $\xi(t):=\eta(t)^{n+1}V(t)$ is strictly decreasing on $(0,1)$ for $n\geq2$.
\end{conjecture}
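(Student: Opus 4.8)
\noindent\emph{Towards a proof.}
The plan is first to reduce $\xi$ to an explicit one-dimensional integral in which every scale-dependent quantity cancels, and then to prove $\xi'<0$ by differentiating a regularised form of that integral. For the reduction, recall that the $(n+1)$-volume enclosed by the graph of revolution $v(\cdot;t)$ is $V(t)=\om_n\int_0^d v(z;t)^n\,dz$. Changing variables via $x=\zeta^{-1}\!\big(P(t)z/d;t\big)$, so that $z=\frac{d}{P(t)}\zeta(x;t)$ and $dz=-\frac{d}{P(t)R(x;t)}\,dx$, and then substituting the definition of $R$, one obtains for $t\in(0,1)$
\[
V(t)=\om_n\,\frac{d^{\,n+1}}{P(t)^{n+1}}\int_t^1\frac{w^n\,S_t(w)}{\sqrt{w^{2n-2}-S_t(w)^2}}\,dw,\qquad S_t(w):=1-Q(t)+Q(t)w^n,
\]
where $w=1-(1-t)x$ runs over $[t,1]$. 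Since $\eta(t)^{n+1}=n^{n+1}Q(t)^{n+1}P(t)^{n+1}d^{-(n+1)}$, the powers of $P(t)$ and $d$ cancel and
\[
\xi(t)=\om_n\,n^{n+1}\,Q(t)^{n+1}\int_t^1\frac{w^n\,S_t(w)}{\sqrt{w^{2n-2}-S_t(w)^2}}\,dw .
\]
Thus $\xi$ is a genuine shape invariant; using $P(t^{-1})=t^{-1}P(t)$ and $R(1-x;t^{-1})=tR(x;t)$ one checks $V(t^{-1})=V(t)$, hence $\xi(t^{-1})=\xi(t)$ and in particular $\xi'(1)=0$. So the conjecture asserts that the half-period cylinder $v(\cdot;1)$ uniquely minimises the normalised volume within the family, and it suffices to prove $\xi'(t)<0$ on $(0,1)$.

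The integrand above has inverse-square-root singularities at $w=t$ and $w=1$, the neck and bulge of the unduloid (where $S_t(w)=w^{n-1}$), so one cannot differentiate under the integral sign directly. The remedy is to factor the radicand: $w^{n-1}-S_t(w)$ vanishes simply at $w=t$ and at $w=1$ and is positive on $(t,1)$ (where the profile $v$ is strictly monotone), so
\[
w^{2n-2}-S_t(w)^2=(1-w)(w-t)\,G_t(w),
\]
with $G_t$ a polynomial of degree $2n-2$ in $w$ whose coefficients depend polynomially on $Q(t)$ and which is strictly positive on $[t,1]$. The substitution $w=\tfrac{1+t}{2}-\tfrac{1-t}{2}\cos\theta$, $\theta\in[0,\pi]$, turns $dw/\sqrt{(1-w)(w-t)}$ into $d\theta$, so, writing $w(\theta,t):=\tfrac{1+t}{2}-\tfrac{1-t}{2}\cos\theta$,
\[
\xi(t)=\om_n\,n^{n+1}\,Q(t)^{n+1}\,J(t),\qquad J(t):=\int_0^\pi\frac{w(\theta,t)^n\,S_t\big(w(\theta,t)\big)}{\sqrt{G_t\big(w(\theta,t)\big)}}\,d\theta,
\]
and the integrand is now smooth in $(\theta,t)$. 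Differentiating under the integral sign, and using that $Q$ is strictly decreasing, reduces the conjecture to the single integral inequality
\[
(n+1)\,\frac{Q'(t)}{Q(t)}+\frac{J'(t)}{J(t)}<0,\qquad t\in(0,1),
\]
whose first summand is already strictly negative.

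The hard part is precisely this inequality --- exactly the ``integral bounds'' alluded to in the preceding remark. Differentiating the integrand of $J$ in $t$ brings in $\partial_Q G_t$, hence the polynomial $G_t$ together with $Q(t)$ and $Q'(t)$ inside a square root, and the estimate must be uniform in both $n\ge2$ and $t\in(0,1)$; this is what has not been carried out. Two natural lines of attack present themselves. (i) Exploit the symmetry $\xi(t^{-1})=\xi(t)$ together with the boundary values $\xi(0^+)=\tfrac12\om_{n+1}n^{n+1}$ (the value for the degenerate half-sphere limit) and $\xi(1)=\om_n(n-1)^{n+1/2}\pi$, for which $\xi(0^+)>\xi(1)$ is an elementary estimate, and reduce the claim to showing that every critical point of $\xi$ in $(0,1)$ is a strict local maximum --- so that $\xi$ has at most one critical point --- together with a sign for $\xi'$ near $t=0$ to rule out an interior maximum; note that this parallels the non-degeneracy hypothesis of Theorem~\ref{stability}. (ii) Estimate the integrand of $J'$ pointwise, using the explicit formulas for $Q$ and $Q'$ and the monotonicity and convexity of $w\mapsto S_t(w)$ on $[t,1]$, reducing matters to an explicit but delicate inequality in $t$ and $n$; in the dimension $n=8$ relevant to the main application the latter can in principle be verified directly.
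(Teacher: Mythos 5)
This statement is a conjecture in the paper, and the paper offers no proof of it (the accompanying remark states explicitly that ``the relevant integral bounds have not been obtained''). Your proposal, by your own account, does not close that gap either: everything up to the displayed inequality $(n+1)\frac{Q'(t)}{Q(t)}+\frac{J'(t)}{J(t)}<0$ is a reformulation, and that inequality \emph{is} the conjecture --- the sign of $J'(t)$ is left completely uncontrolled, so no part of the actual monotonicity claim has been established. To be clear about what is and is not sound: the reduction itself checks out. The change of variables $w=1-(1-t)x$ does give $V(t)=\om_n d^{\,n+1}P(t)^{-(n+1)}\int_t^1 w^nS_t(w)\bigl(w^{2n-2}-S_t(w)^2\bigr)^{-1/2}\,dw$, the powers of $P$ and $d$ cancel against $\eta(t)^{n+1}$, the radicand does factor as $(1-w)(w-t)G_t(w)$ with simple zeros at $w=t,1$ (since $n-1-nQ(t)<0$ and $n-1-ntQ(t)>0$ for $t\in(0,1)$, by the monotonicity of $Q$ and $tQ(t)$), the substitution $w=\tfrac{1+t}{2}-\tfrac{1-t}{2}\cos\theta$ removes the endpoint singularities, and the boundary values $\xi(1)=\om_n(n-1)^{n+\frac12}\pi$ and $\xi(0^+)=\tfrac12\om_{n+1}n^{n+1}$ are correct. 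This is a potentially useful normal form, but it is preparation, not proof.

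Your two suggested routes also do not reduce the difficulty in any demonstrated way. Route (i) needs two unproven inputs --- that every interior critical point of $\xi$ is a strict local maximum, and a sign for $\xi'$ near $t=0$ --- and the first of these requires second-order information about $\xi$ at unknown critical points, which is at least as delicate as bounding $J'/J$ directly; the comparison of the endpoint values $\xi(0^+)>\xi(1)$ alone says nothing about monotonicity. Route (ii) is an honest description of the missing estimate rather than a method: differentiating under the (now smooth) integral brings $Q'(t)\,\partial_QG_t$ into the integrand, and no pointwise bound uniform in $t\in(0,1)$ and $n\ge2$ is offered. Even the fallback of verifying the inequality ``directly'' for $n=8$ would at best be a numerical check of the kind the paper already uses in Corollary \ref{Dim8}, not a proof of the conjecture as stated. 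So the genuine gap is simply that the central inequality is asserted as a target and never proved; the conjecture remains open after your argument exactly as it was before it.
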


\begin{corollary}\label{Dim8}
There are stable CMC unduloids of dimension eight.
\end{corollary}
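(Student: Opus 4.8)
The plan is to apply Theorem \ref{stability} at a cleverly chosen parameter value, with $n=8$, so the real work is to verify the hypotheses of that theorem for dimension eight and to exhibit a $t_0\in(0,1]$ with $V'(t_0)>0$. First I would establish the behaviour of $V(t)$ near the two ends of the parameter interval. As $t\to1$ the unduloid degenerates to the cylinder of half-period radius $\frac{d\sqrt{n-1}}{\pi}$, so $V(1)$ is the volume of that cylinder; as $t\to0^+$ (equivalently, by the $t\to t^{-1}$ symmetry, $t\to\infty$) the unduloid degenerates to a chain of tangent half-spheres sitting in the slab, so $V(0^+)$ is the volume enclosed by a single such half-sphere. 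The key numerical input — this is exactly the comparison Pedrosa--Ritor\'e used — is that for $n=8$ one compares the area of a degenerate half-sphere with the area of the cylinder of the same volume: the relevant inequality flips at $n=9$, but what we actually need here is a statement about the monotonicity of the \emph{volume} along the half-period family near one of its endpoints, together with the sign of $\eta'$.

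Second, I would pin down the sign of $V'$ at the endpoint. The cleanest route is to show that $V'(t)>0$ on an interval adjacent to $t=1$ (or adjacent to $t=0$, using the symmetry remark), so that some $t_0$ in $(0,1)$ has $V'(t_0)>0$; this requires differentiating the explicit formula $V(t)=Vol(v(\cdot;t))$ — with $v$ built from $Q,P,\zeta$ as in Section \ref{SecFamily} — and evaluating the derivative in a limit, which reduces to estimating the $t$-derivatives of $Q$, $P$, and the relevant integrals of $R^{-1}$ and its powers at $t=1$ (where $R(x;1)=\sqrt{(n-1)(1-x)x}$, making these integrals elementary Beta-function values). The sign of this limiting derivative is a single explicit number depending on $n$; for $n=8$ I would check it is positive (and I expect the paper's later technical sections, and the cited near-cylindrical results of \cite{Hartley16,Li18} showing half-period near-cylindrical unduloids are unstable precisely for $n\le10$, to be consistent with and in fact to supply this sign).

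Third, I must discharge the global hypothesis of Theorem \ref{stability}: that at every $t\in(0,1)$ with $V'(t)=0$ one has $V''(t)\neq0$ and $\eta'(t)<0$. Here I would argue that $\eta'(t)<0$ on all of $(0,1)$ for $n=8$ — since $\eta(t)=nQ(t)P(t)/d$ and $Q$ is strictly decreasing while $P$ and its product with $Q$ need to be controlled — using the explicit derivative formula for $Q$ given in the excerpt ($Q'<0$ for $t>0$) together with a bound on $P'/P$; alternatively one invokes (a dimension-eight case of) the stated Conjecture-adjacent fact via direct integral estimates. The nondegeneracy $V''\neq0$ at critical points of $V$ would follow if $V$ is shown to be, say, monotone on $(0,1)$ or to have a single nondegenerate critical point, which one can read off from the second-derivative computation in the technical sections. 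Once these are in place, Theorem \ref{stability} applied with the chosen $t_0$ gives that $v(\cdot;t_0)$ is a stable CMC unduloid, which for $n=8$ is the claimed stable unduloid of dimension eight.

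The main obstacle I anticipate is the global hypothesis rather than the endpoint computation: proving $\eta'(t)<0$ throughout $(0,1)$ (or at least at all zeros of $V'$) for $n=8$ requires genuine integral estimates on $P(t)=\int_0^1 R(x;t)^{-1}\,dx$ and its derivative, since $R$ has an awkward dependence on $t$ away from $t=1$, and the excerpt itself flags that "the relevant integral bounds have not been obtained" in general. For the specific value $n=8$, however, these bounds should be tractable by splitting the integral near the endpoints $x=0,1$ (where $R\sim\sqrt{x}$ and $R\sim\sqrt{1-x}$ up to $t$-dependent constants) and estimating the smooth interior contribution directly; verifying the resulting one-variable inequality is the crux, and I would expect to lean on the detailed estimates assembled in Sections \ref{SecFamilyApp} and \ref{PsiSec}.
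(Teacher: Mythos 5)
Your top-level strategy (apply Theorem \ref{stability} for $n=8$ at some $t_0$ with $V'(t_0)>0$, after checking the hypotheses on $V''$ and $\eta'$) is the same as the paper's, but the execution has a genuine gap, and in one place the sign you hope to extract is the wrong one. You propose to find $V'>0$ on an interval adjacent to $t=1$ (or to $t=0$), and you expect the near-cylindrical results of \cite{Hartley16,Li18} to supply that sign. They supply the opposite sign: for $2\le n\le 10$ the half-period near-cylindrical unduloids are \emph{unstable}, and in the notation of this paper $V$ has a local \emph{minimum} at $t=1$, so $V'(t)<0$ for $t$ just below $1$ when $n=8$ (this is exactly the input used at the end of the proof of Theorem \ref{stability}). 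The other endpoint does not help either: near-spherical unduloids are unstable in every dimension (Pedrosa--Ritor\'e), consistent with $V'<0$ near $t=0$. So the interval where $V'>0$ lies strictly in the interior of $(0,1)$ and cannot be detected by endpoint asymptotics; the paper locates it by explicitly computing and plotting $\frac{V'(t)}{1-t}$ for $n=8$ (Figure \ref{Fig1}). Relatedly, the Pedrosa--Ritor\'e half-sphere versus cylinder area comparison is of no use at $n=8$ (it flips only at $n=9$, and Remark \ref{MorganRem} shows dimension-eight unduloids never minimize), so it cannot substitute for this interior computation.

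The second gap is the global hypothesis of Theorem \ref{stability}. You leave the verification that $\eta'(t)<0$ and $V''(t)\neq0$ at every zero of $V'$ as integral estimates that ``should be tractable,'' which is precisely the part the paper does not do analytically either; instead it checks it numerically for $n=8$ by plotting $V''(t)$ and $\xi'(t)$, where $\xi(t)=\eta(t)^{n+1}V(t)$, and using that at a zero of $V'$ one has $\eta'(t_0)=\xi'(t_0)/\bigl((n+1)\eta(t_0)^{n}V(t_0)\bigr)$, so $\xi'(t_0)<0$ gives $\eta'(t_0)<0$. Your fallback suggestion that $V$ might be monotone on $(0,1)$ or have a single nondegenerate critical point cannot hold: since $V'<0$ near both endpoints of $(0,1)$ and $V'>0$ somewhere in between, $V$ must have at least two interior critical points. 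Also note that Sections \ref{SecFamilyApp} and \ref{PsiSec} contain only the CMC computation and the inverse-projection construction, not the integral estimates you hope to lean on. In short, the missing content of your proposal is exactly the quantitative dimension-eight verification (interior interval with $V'>0$, plus $V''\neq 0$ and $\xi'<0$ at the zeros of $V'$), which the paper supplies by computation rather than by the endpoint or monotonicity arguments you outline.
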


\begin{proof}
In Figure \ref{Fig1} the functions $\frac{V'(t)}{1-t}$ (blue), $V''(t)$ (orange), and $\xi'(t)$ (green), normalised so that the maximum of their absolute values over the domain $(0,1)$ is $1$, for $n=8$ are plotted. $V'(t)$ has been divided by $1-t$ to remove the zero at $t=1$, which is not relevant to the discussion.

\begin{figure}[h]
\centering
\includegraphics[width=\textwidth]{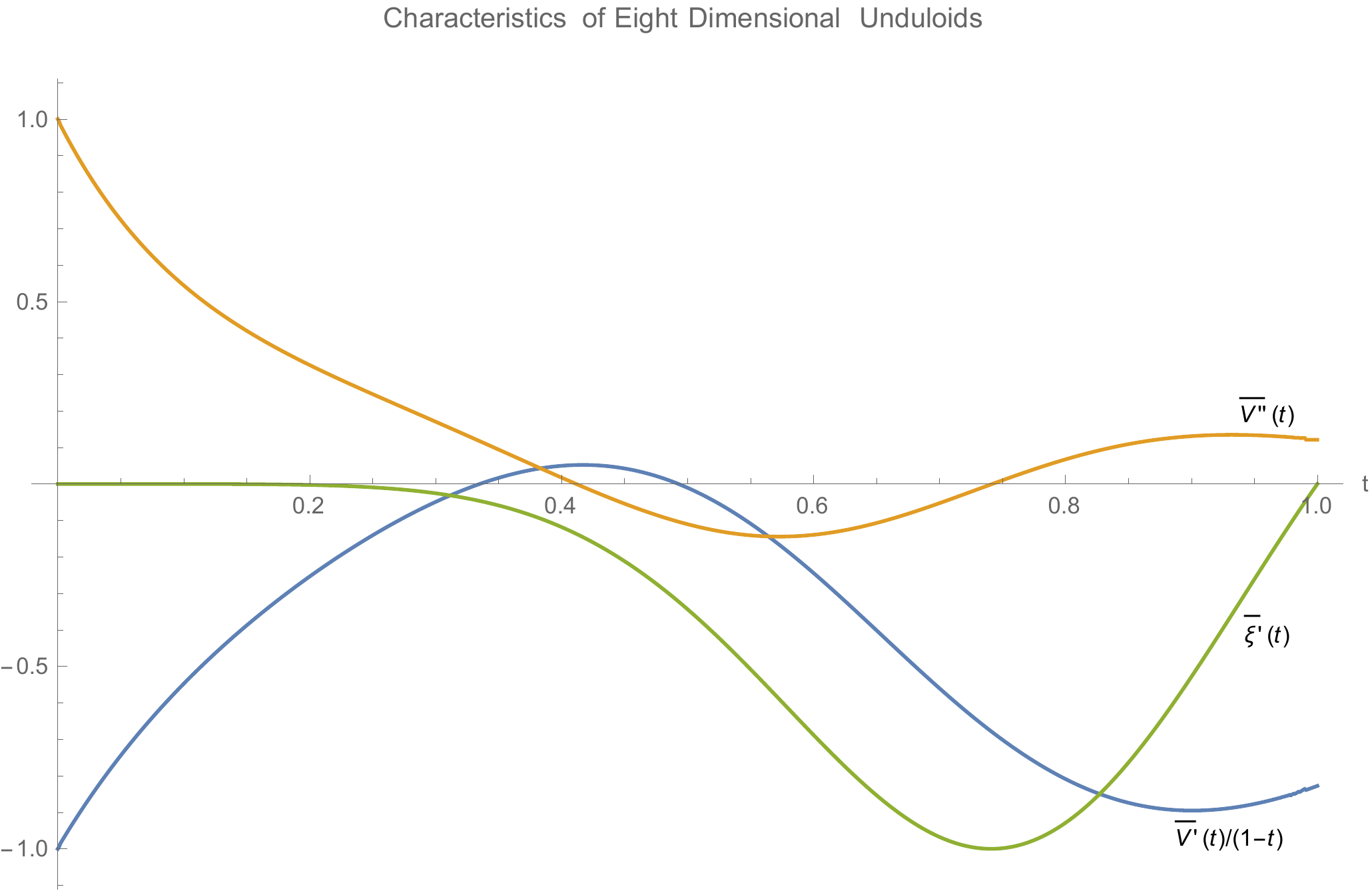}
\caption{Plots of dimension eight unduloid characteristics: $\frac{V'(t)}{1-t}$ (blue), $V''(t)$ (orange), and $\xi'(t)$ (green).}\label{Fig1}
\end{figure}

From this it is clear that at a point $t_0\in(0,1)$ where $V'(t_0)=0$ we have $\xi'(t_0)<0$ and $V''(t_0)\neq0$. Thus the conditions of Theorem \ref{stability} are met, since $\eta'(t_0)=\frac{\xi'(t_0)}{9\eta(t_0)^9V(t_0)}<0$. Further we see that there exists an open interval $I\subset(0,1)$ such that $V'(t)>0$ on $I$ and hence the unduloids $v(\cdot;t)$ for $t\in I$ are stable.
\end{proof}

\begin{remark}
We note that through a change of variables we can write the function $V(t)$ as:
\begin{align*}
V(t)=&\om_n\int_0^dv(z;t)^n\,dz\\
=&\frac{\om_n d^n}{P(t)^n}\int_0^d\left(1-(1-t)\zeta^{-1}\left(\frac{P(t)z}{d};t\right)\right)^n\,dz\\
=&\frac{\om_n d^n}{P(t)^n}\int_1^0\left(1-(1-t)x\right)^n\frac{-d}{P(t)}R(x;t)^{-1}\,dx\\
=&\frac{\om_n d^{n+1}}{P(t)^{n+1}}\int_0^1\left(1-(1-t)x\right)^nR(x;t)^{-1}\,dx.
\end{align*}
Also, note that $V(t^{-1})=V(t)$ and so $V'(1)=0$.
\end{remark}

\begin{remark}\label{MorganRem}
Despite these unduloids being stable, computations show that they do not in fact solve the isoperimetric problem. For a particular $t\in(0,1)$ the area of a cylinder with the same volume as $v(;,t)$ is given by $SA_c(t)=n\om_n^{\frac1n}V(t)^{\frac{n-1}n}d^{\frac1n}$ and the area of the half sphere of the same volume is $SA_s(t)=2^{\frac{-1}{n+1}}(n+1)\om_{n+1}^{\frac{1}{n+1}}V(t)^{\frac{n}{n+1}}$ (provided $\frac{2V(t)}{\om_{n+1}}\leq d^{n+1}$). By subtracting the area of the unduloid:
\begin{align*}
SA_u(t)=&n\om_n\int_0^dv(z;t)^{n-1}\sqrt{1+v_z(z;t)^2}\,dz\\
=&\frac{n\om_n d^{n-1}}{P(t)^{n-1}}\int_0^d\left(1-(1-t)\zeta^{-1}\left(\frac{P(t)z}{d};t\right)\right)^{n-1}\sqrt{1+(1-t)^2R\left(\zeta^{-1}\left(\frac{P(t)z}{d};t\right);t\right)^2}\,dz\\
=&\frac{n\om_nd^n}{P(t)^n}\int_0^1(1-(1-t)x)^{n-1}\sqrt{R(x;t)^{-2}+(1-t)^2}\,dx,
\end{align*}
from these areas we can plot the difference for dimension eight, see Figure \ref{Fig2}.
\begin{figure}[h]
\centering
\includegraphics[width=\textwidth]{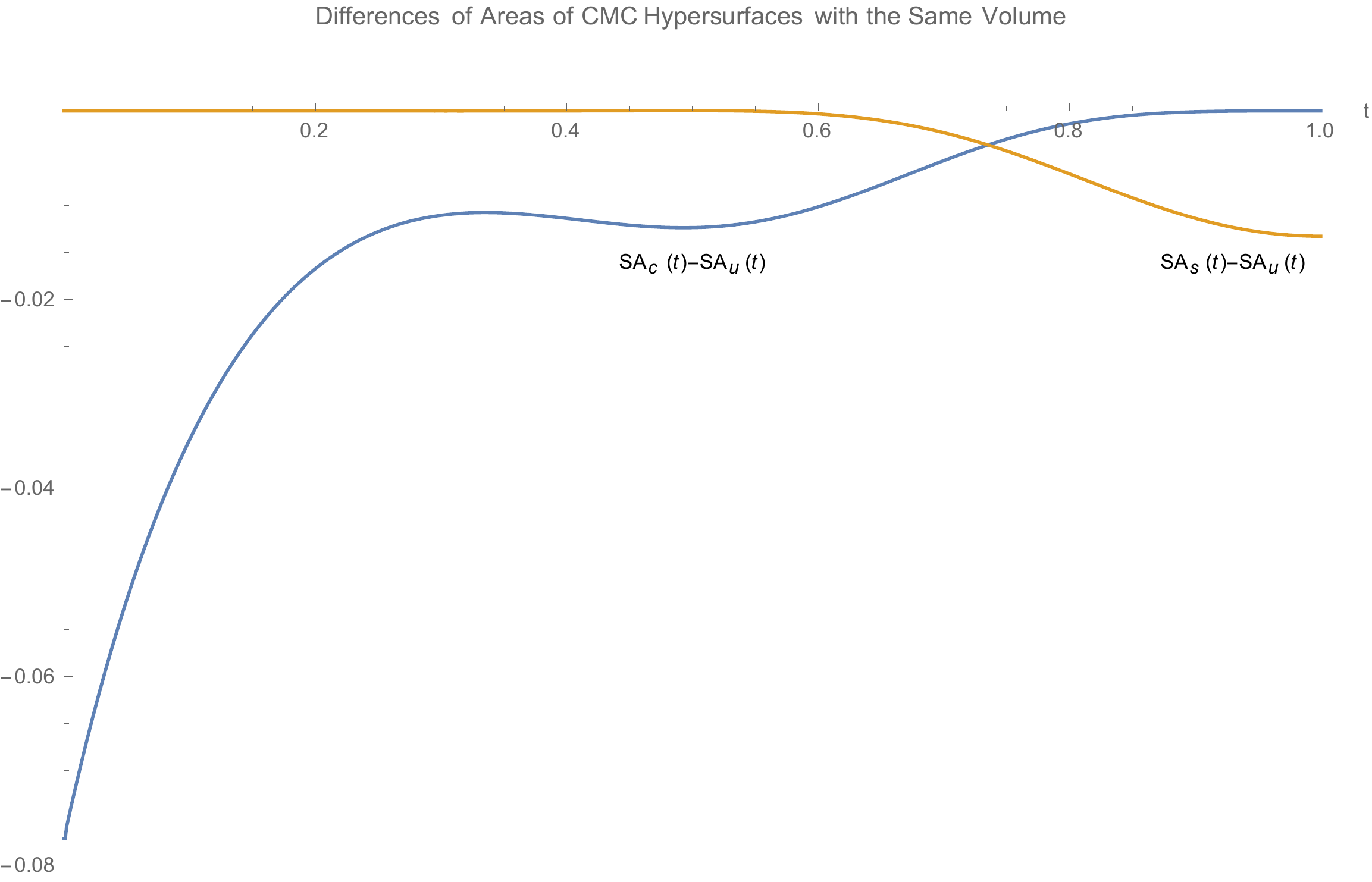}
\caption{Differences in area for a cylinder (blue) and half sphere (orange) compared to unduloids with the same volume in dimension eight with $d=1$.}\label{Fig2}
\end{figure}

The condition for a valid half sphere, $\frac{2V(t)}{\om_{n+1}}\leq d^{n+1}$, is always satisfied. This shows that any unduloid in dimension eight has a larger area than the corresponding cylinder and/or half sphere of the same volume.
\end{remark}


\section{The Stability Operator for CMC Hypersurfaces}\label{SecStable}
We will now recast the problem of stability in terms of an operator applicable to our situation. We only consider the case where $\Si$ is rotationally symmetric, around an axis in the $z$ direction, with free boundary and defined by a profile curve $v:[0,d]\to\RR_+$. With this assumption the functional (\ref{StabilityGen}) becomes
\[
J_v(u)=\frac{\int_{\SS^{n-1}}\int_0^d \left(\frac{1}{1+v_z^2}u_z^2+\frac{1}{v^2}\|\tilde{\nabla} u\|_{\SS^{n-1}}^2 -|A|^2u^2\right)v^{n-1}\sqrt{1+v_z^2}\,dz\,d\mu_{\SS^{n-1}}}{\int_{\SS^{n-1}}\int_0^d u^2v^{n-1}\sqrt{1+v_z^2}\,dz\,d\mu_{\SS^{n-1}}},
\]
for $u:[0,d]\times\SS^{n-1}\to\RR$ that satisfies $\int_{\SS^{n-1}}\int_0^d uv^{n-1}\sqrt{1+v_z^2}\,dz\,d\mu_{\SS^{n-1}}=0$ and the free boundary condition $u_z|_{z=0}=u_z|_{z=d}=0$.

We perform the function substitution $w=u\sqrt{1+v_z^2}$ and create the functional $\tilde{J}_v(w)=J_v\left(\frac{w}{\sqrt{1+v_z^2}}\right)$, for functions $w$ satisfying $\int_{\SS^{n-1}}\int_0^d wv^{n-1}\,dz\,d\mu_{\SS^{n-1}}=0$ and the free boundary condition. The functional is given by (see Lemma \ref{Jtil})
\[
\tilde{J}_v(w)=\frac{\int_{\SS^{n-1}}\int_0^d \left(\|\nabla w\|_{\Si}^2-\frac{(n-1)w^2}{v^2} \right)\frac{v^{n-1}}{\sqrt{1+v_z^2}}\,dz\,d\mu_{\SS^{n-1}}}{\int_{\SS^{n-1}}\int_0^d w^2\frac{v^{n-1}}{\sqrt{1+v_z^2}}\,dz\,d\mu_{\SS^{n-1}}}.
\]

Due to the symmetry of $v(\cdot;t)$ we only need to prove this is positive on rotationally symmetric functions, see the reasoning \cite{Vogel87} (for example). The functional acting on rotationally symmetric functions is:
\begin{align*}
\tilde{J}_v(w)=&\frac{\int_0^d\left(\frac{w_z^2}{(1+v_z^2)^{\frac32}}-\frac{(n-1)w^2}{v^2\sqrt{1+v_z^2}}\right)v^{n-1}\,dz}{\int_0^d  w^2\frac{v^{n-1}}{\sqrt{1+v_z^2}}\,dz}\\
=&\frac{\int_0^d\left(-\frac{w_{zz}}{(1+v_z^2)^{\frac32}}+\frac{3v_zv_{zz}w_z}{(1+v_z^2)^{\frac52}}-\frac{(n-1)v_zw_z}{v(1+v_z^2)^{\frac32}}-\frac{(n-1)w}{v^2\sqrt{1+v_z^2}}\right)wv^{n-1}\,dz}{\int_0^d  w^2\frac{v^{n-1}}{\sqrt{1+v_z^2}}\,dz}\\
=&\frac{\int_0^d DH(v)[w]wv^{n-1}\,dz}{\int_0^d  w^2\frac{v^{n-1}}{\sqrt{1+v_z^2}}\,dz},
\end{align*}
where $H$ is the mean curvature functional on rotationally symmetric functions:
\[
H(u)=\frac{-u_{zz}}{(1+u_z^2)^{\frac32}}+\frac{n-1}{u\sqrt{1+u_z^2}}.
\]
Since we are only considering functions, $w$, satisfying $\int_0^d wv^{n-1}\,dz=0$, this can also be written as
\[
\tilde{J}_v(w)=\frac{\int_0^d \left(DH(v)[w]-\frac{\int_0^d DH(v)[w]v^{n-1}\,dz}{\int_0^d v^{n-1}\,dz}\right)wv^{n-1}\,dz}{\int_0^d  w^2\frac{v^{n-1}}{\sqrt{1+v_z^2}}\,dz},
\]
so $\tilde{J}_v$ is positive on functions satisfying $\int_0^d wv^{n-1}\,dz=0$ and the free boundary condition if and only if
\[
A(v)[w]:=DH(v)[w]-\frac{\int_0^d DH(v)[w]v^{n-1}\,dz}{\int_0^d v^{n-1}\,dz}
\]
is positive definite on the same space of functions. However, since $A(v)$ maps back into functions with weighted mean zero, this is equivalent to proving all its eigenvalues are greater than zero.

\begin{proposition}\label{EquivStable}
A rotationally symmetric CMC hypersurface defined by the function $v$ is stable if the operator $A(v):X\to X$ only has positive eigenvalues, where $X:=\{w:[0,d]\to\RR:\int_0^dwv^{n-1}\,dz=0,\ w_z|_{z=0}=w_z|_{z=d}=0\}$.
\end{proposition}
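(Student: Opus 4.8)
The plan is to trace through the chain of reductions already set up in the preceding discussion and verify that each equivalence or implication survives. First I would fix the profile curve $v=v(\cdot;t)$ and recall that, by the cited symmetrization argument (as in \cite{Vogel87}), it suffices to show $\tilde J_v(w)\geq 0$ for rotationally symmetric test functions $w$ satisfying $\int_0^d wv^{n-1}\,dz=0$ and the free boundary condition $w_z|_{z=0}=w_z|_{z=d}=0$; positivity of $\tilde J_v$ on all admissible $u$ then follows because $\tilde J_v$ was defined precisely as $J_v$ after the substitution $w=u\sqrt{1+v_z^2}$, which is an admissibility-preserving bijection. So the statement $\Sigma$ is stable reduces to $\tilde J_v(w)\geq 0$ on the space $X$.

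Next I would justify the three displayed rewritings of $\tilde J_v$. The first is an integration by parts in the numerator: using the free boundary condition to kill the boundary terms, $\int_0^d \frac{w_z^2}{(1+v_z^2)^{3/2}}v^{n-1}\,dz$ is converted into $-\int_0^d w\,\frac{d}{dz}\!\left(\frac{w_z v^{n-1}}{(1+v_z^2)^{3/2}}\right)dz$, and expanding the derivative produces exactly the $-w_{zz}$, $3v_zv_{zz}w_z$, and $(n-1)v_zw_z/v$ terms displayed, so that the numerator becomes $\int_0^d DH(v)[w]\,w\,v^{n-1}\,dz$ — here one checks that $DH(v)[w]$, the linearization of the mean curvature functional $H$ at $v$, is indeed the bracketed differential operator. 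The second rewriting inserts the mean-value subtraction: since $w\in X$ has $\int_0^d wv^{n-1}\,dz=0$, adding the constant multiple $-\left(\int DH(v)[w]v^{n-1}/\int v^{n-1}\right)$ inside the numerator integral changes it by $\left(\text{const}\right)\cdot\int_0^d wv^{n-1}\,dz=0$. Hence the numerator equals $\int_0^d A(v)[w]\,w\,v^{n-1}\,dz$ with $A(v)$ as defined.

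Finally I would make the spectral passage. Observe that $A(v)$ indeed maps $X$ into $X$: it manifestly produces a function with $\int_0^d (\cdot)v^{n-1}\,dz=0$ by construction of the subtracted term, and one must also check that it preserves the free boundary condition (this follows because $DH(v)[w]$ inherits $w_z|_{z=0,d}=0$ through the structure of the operator, the subtracted term being constant). Then $A(v)$ is a self-adjoint operator on $X$ with respect to the weighted inner product $\langle w_1,w_2\rangle=\int_0^d w_1w_2 v^{n-1}\,dz$ — self-adjointness is the formal symmetry of the Sturm–Liouville-type operator $DH(v)$ together with the fact that the correction term is an orthogonal projection in this inner product — so it has a real discrete spectrum with an $L^2_{v^{n-1}}$-orthonormal eigenbasis of $X$. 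Writing $w=\sum c_k \phi_k$ with $A(v)\phi_k=\lambda_k\phi_k$, the numerator of $\tilde J_v(w)$ equals $\sum \lambda_k c_k^2$, which is $\geq 0$ for all $w\in X$ precisely when every $\lambda_k>0$ (strictly, to absorb the cases where the denominator inner product $\int_0^d w^2 v^{n-1}(1+v_z^2)^{-1/2}\,dz$ might otherwise interact; nonnegativity already suffices for the stated implication). The one genuine subtlety — the main thing to be careful about rather than the main obstacle — is the functional-analytic setup: one should specify that $X$ is given the topology of an appropriate Sobolev space, that $A(v)$ has compact resolvent so the eigenfunction expansion is legitimate, and that the orthogonality of the expansion holds in the weight $v^{n-1}$ while the quadratic form in the denominator uses the slightly different weight $v^{n-1}(1+v_z^2)^{-1/2}$; since both weights are smooth and bounded above and below on the compact interval (as $v$ stays away from $0$ for an unduloid), they are comparable and the conclusion about the sign of $\tilde J_v$ is unaffected.
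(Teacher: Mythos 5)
Your proposal is correct and follows essentially the same route as the paper, whose justification of Proposition \ref{EquivStable} is exactly the chain of reductions laid out in Section \ref{SecStable}: the symmetrization reduction to rotationally symmetric test functions, the substitution $w=u\sqrt{1+v_z^2}$ (Lemma \ref{Jtil}), the integration by parts identifying the numerator with $\int_0^d DH(v)[w]\,w\,v^{n-1}\,dz$, the mean-zero correction defining $A(v)$, and the spectral conclusion, with your added functional-analytic care (self-adjointness in the weight $v^{n-1}$, compact resolvent) only making explicit what the paper leaves implicit. The one shaky side remark is your claim that $DH(v)[w]$ inherits the Neumann condition (it need not, and the paper glosses over the same point), but this is immaterial since the stated implication only uses the quadratic form and the eigenfunction expansion of the self-adjoint realization, which you supply.
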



\section{Null Space of the Stability Operator}\label{SecNull}
In order the determine the stability of unduloids in our family, it is necessary to determine the critical cases, that is at which $t$ does the operator $\tilde{A}(t):=A(v(\cdot;t))$ have a zero eigenvalue when acting on the space of functions:
\[
X_t:=\{w:[0,d]\to\RR:\int_0^dw(z)v(z;t)^{n-1}\,dz=0,\ w_z|_{z=0}=w_z|_{z=d}=0\}.
\]
A zero eigenvalue of $\tilde{A}(t)$ means that the linearised mean curvature operator is constant, so we first consider this situation without restricting to functions in $X_t$.

\begin{theorem}\label{ConsLinMC}
Let $w$ satisfy $DH(v(\cdot;t_0))[w]=C$ for some $t_0>0$ and $C\in\RR$. We have three cases:
\begin{itemize}
\item If $\eta'(t_0)\neq0$, there exists $\al,\be\in\RR$ such that
\[
w(z)=\al u_r(z;t_0,t_0)+\be u_z(z;t_0,t_0)+\frac{C}{\eta'(t_0)}u_t(z;t_0,t_0).
\]
\item If $\eta'(t_0)=0$ and $t_0\neq1$, let $k\geq2$ be the first $k$ such that $\eta^{(k)}(t_0)\neq0$, then there exists $\al,\be\in\RR$ such that
\[
w(z)=\al u_r(z;t_0,t_0)+\be u_z(z;t_0,t_0)+\frac{C}{\eta^{(k)}(t_0)}u_{t^{k}}(z;t_0,t_0).
\]
\item If $t_0=1$, there exists $\al,\be\in\RR$ such that
\[
w(z)=\al \cos\left(\frac{\pi z}{d}\right)+\be \sin\left(\frac{\pi z}{d}\right) -\frac{Cd^2}{\pi^2}.
\]
\end{itemize}
\end{theorem}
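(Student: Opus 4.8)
The plan is to exploit a feature of the family that is visible directly from the formula in Proposition~\ref{FamCMC}: the parameter $t$ enters $u(z;r,t)$ only through the number $\eta(t)$. Writing $Q(t)P(t)=\eta(t)d/n$ one checks immediately that
\[
u(z;r,t)=\frac{nQ(r)}{\eta(t)}\left(1-(1-r)\zeta^{-1}\left(\frac{\eta(t)z}{nQ(r)};r\right)\right)=:\phi\bigl(z;r,\eta(t)\bigr),
\]
and that for each fixed $r$ the function $\phi(\cdot;r,h)$ is obtained from $\phi(\cdot;r,1)$ by the rescaling $\phi(z;r,h)=h^{-1}\phi(hz;r,1)$. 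Since rescaling a profile curve by a factor multiplies the mean curvature of the corresponding surface of revolution by the reciprocal of that factor, and since $u(\cdot;r,t)=\phi(\cdot;r,\eta(t))$ has mean curvature $\eta(t)$, it follows that $H(\phi(\cdot;r,h))=h$ for all $h>0$ and all $r>0$. The operator $L:=DH(v(\cdot;t_0))$ is a regular second order linear ODE on $[0,d]$ (its leading coefficient $(1+v_z^2)^{-3/2}$ is bounded away from $0$ and $\infty$, the other coefficients are smooth), so the solutions of $Lw=C$, with no boundary conditions imposed, form an affine space of dimension two. It therefore suffices to produce one particular solution of $Lw=C$ together with a basis of $\ker L$.

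First I would identify the kernel using two symmetries. Translating a surface of revolution along its axis does not change its mean curvature, so $H(u(\cdot+s;t_0,t_0))\equiv\eta(t_0)$ in $s$; differentiating at $s=0$ gives $L[u_z(\cdot;t_0,t_0)]=0$. Similarly, since $\eta$ is independent of $r$, $H(u(\cdot;r,t_0))\equiv\eta(t_0)$ in $r$, and differentiating at $r=t_0$ gives $L[u_r(\cdot;t_0,t_0)]=0$. For $t_0\neq1$ these two solutions are independent: from the formula and $\zeta^{-1}(0;r)=1$ one gets $u(0;r,t)=rQ(r)d/(P(t)Q(t))$, hence $u_r(0;t_0,t_0)=d\,(tQ(t))'|_{t_0}/(P(t_0)Q(t_0))>0$ because $tQ(t)$ is strictly increasing, whereas $u_z(0;t_0,t_0)=v_z(0;t_0)=0$. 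So $\ker L=\operatorname{span}\{u_r(\cdot;t_0,t_0),\,u_z(\cdot;t_0,t_0)\}$ when $t_0\neq1$.

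Next I would produce the particular solution. Differentiating $H(\phi(\cdot;t_0,h))=h$ in $h$ at $h=\eta(t_0)$ (where $\phi(\cdot;t_0,\eta(t_0))=v(\cdot;t_0)$) gives $L[\phi_h(\cdot;t_0,\eta(t_0))]=1$, so $C\phi_h(\cdot;t_0,\eta(t_0))$ solves $Lw=C$; it remains to recognise this inside the family via $u(\cdot;t_0,t)=\phi(\cdot;t_0,\eta(t))$. When $\eta'(t_0)\neq0$, the chain rule gives $u_t(\cdot;t_0,t_0)=\eta'(t_0)\,\phi_h(\cdot;t_0,\eta(t_0))$, so $\tfrac{C}{\eta'(t_0)}u_t(\cdot;t_0,t_0)=C\phi_h(\cdot;t_0,\eta(t_0))$ is the required particular solution; adding $\ker L$ yields the first case. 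When $\eta'(t_0)=0$ and $k\geq2$ is the first index with $\eta^{(k)}(t_0)\neq0$, I would Taylor expand $\eta(t)-\eta(t_0)=\tfrac{\eta^{(k)}(t_0)}{k!}(t-t_0)^k+o((t-t_0)^k)$, and feeding this into the (smooth) expansion of $\phi(\cdot;t_0,\eta(t))$ about $h=\eta(t_0)$ shows that $u_{t^j}(\cdot;t_0,t_0)=0$ for $0<j<k$ while $u_{t^k}(\cdot;t_0,t_0)=\eta^{(k)}(t_0)\,\phi_h(\cdot;t_0,\eta(t_0))$; hence $\tfrac{C}{\eta^{(k)}(t_0)}u_{t^k}(\cdot;t_0,t_0)=C\phi_h(\cdot;t_0,\eta(t_0))$ is again a particular solution, and adding $\ker L$ gives the second case.

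Finally, the value $t_0=1$ has to be handled on its own, because there $v(\cdot;1)$ is the constant $d/P(1)=d\sqrt{n-1}/\pi$, so $v_z\equiv0$ and $u_z(\cdot;1,1)\equiv0$ is no longer a usable kernel element. But then $L$ has constant coefficients: using $(n-1)/v(z;1)^2=\pi^2/d^2$ one gets $Lw=-w_{zz}-\tfrac{\pi^2}{d^2}w$, whose kernel is spanned by $\cos(\pi z/d)$ and $\sin(\pi z/d)$ and for which the constant $w\equiv-Cd^2/\pi^2$ is a particular solution; this is the third case directly. The conceptual content of the argument is entirely in the observation that the $t$-dependence of the family factors through $\eta(t)$; the only genuinely technical points are the differentiability of $\phi$ (equivalently of $\zeta^{-1}$) up to the endpoints $z=0,d$, which follows from the square-root behaviour of $R(\cdot;t)$ near $x=0$ and $x=1$ noted in Section~\ref{SecFamily}, and the bookkeeping in the Taylor expansion of the $\eta'(t_0)=0$ case.
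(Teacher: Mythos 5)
Your proposal is correct and takes essentially the same route as the paper: the kernel of $DH(v(\cdot;t_0))$ is spanned by $u_r(\cdot;t_0,t_0)$ and $u_z(\cdot;t_0,t_0)$ (independence via $u_r(0;t_0,t_0)>0$ from the monotonicity of $tQ(t)$, with $t_0=1$ excluded because $u_z\equiv0$ there), a particular solution is obtained by differentiating the CMC identity along the family, and $t_0=1$ is the constant-coefficient case solved explicitly. Your factorisation of the $t$-dependence through $h=\eta(t)$, giving $u_{t^k}(\cdot;t_0,t_0)=\eta^{(k)}(t_0)\,\phi_h$, is only a tidier bookkeeping of the paper's computation that $u_t$ is proportional to $\eta'(t)$ and that $DH(v(\cdot;t_0))[u_{t^k}(\cdot;t_0,t_0)]=\eta^{(k)}(t_0)$.
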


\begin{proof}
We start by considering the $\eta'(t_0)\neq0$ case. By differentiating the equation $H(u(z;r,t))=\eta(t)$ with respect to each of the variables we obtain
\begin{equation}\label{HuDerivs}
\begin{array}{cc}
DH(u(z;r,t))[u_z(z;r,t)]=0, &  DH(u(z;r,t))[u_r(z;r,t)]=0,\\
\multicolumn{2}{c}{DH(u(z;r,t))[u_t(z;r,t)]=\eta'(t),}
\end{array}
\end{equation}
so at $r=t=t_0$ we have
\begin{equation}\label{HuDerivs0}
\begin{array}{cc}
DH(v(z;t_0))[u_z(z;t_0,t_0)]=0, & DH(v(z;t_0))[u_r(z;t_0,t_0)]=0,\\
\multicolumn{2}{c}{DH(v(z;t_0))[u_t(z;t_0,t_0)]=\eta'(t_0),}
\end{array}
\end{equation}
so the statement follows from standard linear second order DE theory provided $u_z(z;t_0,t_0)$ and $u_r(z;t_0,t_0)$ are linearly independent. To see this is the case we note that $u_z(0;t_0,t_0)=0$ and $u_z(z;t_0,t_0)\equiv0$ if and only if $t_0=1$, which is excluded from this case. Therefore the functions are linearly independent if $u_r(0;t_0,t_0)\neq0$. We have $u(z;r,t)=\frac{nQ(r)}{\eta(t)}\left(1-(1-r)\zeta^{-1}\left(\frac{\eta(t)z}{nQ(r)};r\right)\right)$, so
\begin{align*}
u_r(z;r,t)=&\frac{nQ'(r)}{\eta(t)}\left(1-(1-r)\zeta^{-1}\left(\frac{\eta(t)z}{nQ(r)};r\right)\right)+\frac{nQ(r)}{\eta(t)}\zeta^{-1}\left(\frac{\eta(t)z}{nQ(r)};r\right)\\
&+\frac{(1-r)Q'(r)z}{Q(r)}\zeta^{-1}_y\left(\frac{\eta(t)z}{nQ(r)};r\right)-\frac{n(1-r)Q(r)}{\eta(t)}\zeta^{-1}_r\left(\frac{\eta(t)z}{nQ(r)};r\right),
\end{align*}
and hence by using $\zeta^{-1}(0;r)=1$ (and therefore $\zeta^{-1}(0;r)=0$)
\[
u_r(0;r,t)=\frac{n(rQ'(r)+Q(r))}{\eta(t)}=\frac{n(rQ(r))'}{\eta(t)}>0.
\]

Next we consider the case where $\eta'(t_0)=0$ and $t_0\neq1$. We have
\begin{align*}
u_t(z;r,t)=&-\frac{nQ(r)\eta'(t)}{\eta(t)^2}\left(1-(1-r)\zeta^{-1}\left(\frac{\eta(t)z}{nQ(r)};r\right)\right)-\frac{(1-r)\eta'(t)z}{\eta(t)}\zeta^{-1}_y\left(\frac{\eta(t)z}{nQ(r)};r\right)\\
=&-\frac{\eta'(t)}{\eta(t)}\left(\frac{nQ(r)}{\eta(t)}\left(1-(1-r)\zeta^{-1}\left(\frac{\eta(t)z}{nQ(r)};r\right)\right)+(1-r)z\zeta^{-1}_y\left(\frac{\eta(t)z}{nQ(r)};r\right)\right),
\end{align*}
and hence $u_t(z;t_0,t_0)\equiv0$, in fact $u_{t^i}(z;t_0,t_0)\equiv0$ for all $i=1,\ldots,k-1$. By now differentiating the final equation of (\ref{HuDerivs}) again with respect to $t$ we obtain
\[
D^2H(u(z;r,t))[u_t(z;r,t),u_t(z;r,t)]+DH(u(z;r,t))[u_{tt}(z;r,t)]=\eta''(t)
\]
Therefore $DH(v(z;t_0))[u_{tt}(z;t_0,t_0)]=\eta''(t_0)$ and it is clear that
\[
DH(v(z;t_0))[u_{t^i}(z;t_0,t_0)]=\eta^{(i)}(t_0),
\]
for all $i=1,\ldots,k$. In particular, $DH(v(z;t_0))[u_{t^k}(z;t_0,t_0)]=\eta^{(k)}(t_0)\neq0$. The proof of linear independence of $u_z(z;t_0,t_0)$ and $u_r(z;t_0,t_0)$ is the same as in the previous case, so we obtain the conclusion.

Lastly, we consider the case when $t_0=1$. In this case we use that $v(z;1)=\frac{d}{P(1)}$ to write out the differential equation for $w$:
\[
DH\left(\frac{d}{P(1)}\right)[w]=-w''(z)-\frac{(n-1)P(1)^2}{d^2}w(z)=C,
\]
which, using that $P(1)=\int_0^1\frac{1}{\sqrt{(n-1)(1-x)x}}\,dx=\frac{\pi}{\sqrt{n-1}}$, is easily solved to give the conclusion.
\end{proof}

\begin{remark}
We note that in the $t_0=1$ case, $\zeta(x;1)=\frac{2}{\sqrt{n-1}}\arcsin(\sqrt{1-x})$ and hence $\zeta^{-1}(y;1)=\cos^2\left(\frac{\sqrt{n-1}y}{2}\right)=\frac{1}{2}+\frac{1}{2}\cos(\sqrt{n-1}y)$. Therefore
\[
u_r(z;1,1)=\frac{nQ'(1)}{\eta(1)}+\frac{nQ(1)}{\eta(1)}\zeta^{-1}\left(\frac{\eta(1)z}{nQ(1)};1\right)=\frac{d\sqrt{n-1}}{2\pi}\cos\left(\frac{\pi z}{d}\right).
\]
Also $u_{tt}(z;1,1)\equiv-\frac{d^2\eta''(1)}{\pi^2}=-\frac{(n^2-10n+10)\sqrt{n-1}d}{48\pi}\neq0$, so the second case holds with $k=2$ and $u_z(z;t_0,t_0)$ replaced with $\sin\left(\frac{\pi z}{d}\right)$.
\end{remark}

\begin{corollary}\label{NullA}
If $V'(t_0)\neq0$ then $\textrm{Null}(\tilde{A}(t_0))$ is empty, otherwise it is one dimensional and $\textrm{Null}(\tilde{A}(t_0))=\textrm{span}\{v_t(z;t_0)\}$.
\end{corollary}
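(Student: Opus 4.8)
The plan is to use the identification
$\textrm{Null}(\tilde{A}(t_0)) = \{w\in X_{t_0} : DH(v(\cdot;t_0))[w]\ \text{is constant}\}$, which holds because $A(v)[w]=0$ says exactly that $DH(v)[w]$ equals its own $v^{n-1}$-weighted average, hence is constant (and conversely any constant is its own average). Then I would pin this space down by combining Theorem \ref{ConsLinMC} with the free boundary condition, and finally impose the volume constraint.

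First I would record that $v_t(z;t_0)$ always lies in the unconstrained solution space. Differentiating $H(v(z;t))=\eta(t)$ in $t$ gives $DH(v(z;t_0))[v_t(z;t_0)]=\eta'(t_0)$, a constant; differentiating the identities $v_z(0;t)=v_z(d;t)=0$ (built into Proposition \ref{FamCMC} for every $t$) in $t$ shows $v_t(\cdot;t_0)$ satisfies the free boundary condition; hence $A(v)[v_t]=\eta'(t_0)-\eta'(t_0)=0$. One also needs $v_t\not\equiv0$: if $\eta'(t_0)\neq0$ this is immediate, and if $\eta'(t_0)=0$ then $u_t(\cdot;t_0,t_0)\equiv0$ (from the proof of Theorem \ref{ConsLinMC}), so $v_t(\cdot;t_0)=u_r(\cdot;t_0,t_0)$, which is nonzero since $u_r(0;t_0,t_0)=n(t_0Q(t_0))'/\eta(t_0)>0$.

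Next, for $t_0\neq1$, let $k\geq1$ be minimal with $\eta^{(k)}(t_0)\neq0$; by Theorem \ref{ConsLinMC} every $w$ with $DH(v)[w]$ constant is a combination of $u_r,u_z,u_{t^k}$ (at $(z;t_0,t_0)$). Because the left endpoint of the domain of $u(\cdot;r,t)$ is always $0$ with $u_z(0;r,t)\equiv0$, differentiating in $r$ and $t$ gives $(u_r)_z(0;t_0,t_0)=(u_{t^k})_z(0;t_0,t_0)=0$, whereas $(u_z)_z(0;t_0,t_0)=v_{zz}(0;t_0)\neq0$ since the turning point of the profile at $z=0$ is non-degenerate for $t_0\neq1$ (evaluate $H(v)=\eta(t_0)$ where $v_z=0$; the second derivative vanishes only if $t_0^n-nt_0+n-1=0$, i.e.\ $t_0=1$). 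So $w_z(0)=0$ kills the $u_z$-component. For the endpoint $z=d$, with value $b(r,t):=Q(r)P(r)d/(Q(t)P(t))$ and $b(t,t)\equiv d$, differentiating $u_z(b(r,t);r,t)\equiv0$ gives $(u_r)_z(d;t_0,t_0)=-v_{zz}(d;t_0)b_r(t_0,t_0)$ and, after $k$ $t$-differentiations using $u_{t^i}(\cdot;t_0,t_0)\equiv0$ for $i<k$, $(u_{t^k})_z(d;t_0,t_0)=-v_{zz}(d;t_0)\,\partial_t^k b(t_0,t)|_{t_0}$; since $b(t_0,t)=d\eta(t_0)/\eta(t)$ one has $b_r(t_0,t_0)=d\eta'(t_0)/\eta(t_0)=-\partial_t b(t_0,t)|_{t_0}$ and $\partial_t^k b(t_0,t)|_{t_0}=-d\eta^{(k)}(t_0)/\eta(t_0)\neq0$, while $v_{zz}(d;t_0)\neq0$ by the analogous non-degeneracy. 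If $k=1$ then $(u_t)_z(d)=-(u_r)_z(d)\neq0$, so $w_z(d)=0$ forces the $u_r$- and $u_t$-coefficients to coincide, i.e.\ $w=\alpha(u_r+u_t)=\alpha v_t$; if $k\geq2$ then $(u_r)_z(d)=0$ but $(u_{t^k})_z(d)\neq0$, so $w_z(d)=0$ forces the $u_{t^k}$-coefficient to vanish, i.e.\ $w=\alpha u_r=\alpha v_t$. Either way the free-boundary solutions of $DH(v)[w]=\mathrm{const}$ are exactly $\textrm{span}\{v_t(\cdot;t_0)\}$. For $t_0=1$, Theorem \ref{ConsLinMC} gives $w=\alpha\cos(\pi z/d)+\beta\sin(\pi z/d)-Cd^2/\pi^2$; $w_z(0)=0$ forces $\beta=0$ and $w_z(d)=0$ is then automatic, so the free-boundary solutions are $\{\alpha\cos(\pi z/d)+\mathrm{const}\}$.

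Finally I would impose the mean-zero condition using $\int_0^d v_t(z;t_0)v(z;t_0)^{n-1}\,dz=V'(t_0)/(n\om_n)$, which follows from $V(t)=\om_n\int_0^d v(z;t)^n\,dz$. For $t_0\neq1$: if $V'(t_0)\neq0$ no nonzero multiple of $v_t$ lies in $X_{t_0}$, so $\textrm{Null}(\tilde{A}(t_0))=\{0\}$; if $V'(t_0)=0$ then $v_t\in X_{t_0}$ and $\textrm{Null}(\tilde{A}(t_0))=\textrm{span}\{v_t\}$. For $t_0=1$ the weight is constant and $\int_0^d\cos(\pi z/d)\,dz=0$, so the mean-zero condition removes the constant part and leaves $\textrm{span}\{\cos(\pi z/d)\}=\textrm{span}\{v_t(\cdot;1)\}$; and $V'(1)=0$, so this matches the $V'(t_0)=0$ branch. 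The step I expect to be the main obstacle is the $z=d$ computation — producing $(u_r)_z(d;t_0,t_0)$ and $(u_{t^k})_z(d;t_0,t_0)$ correctly and confirming they do not both vanish. This rests on the non-degeneracy $v_{zz}(d;t_0)\neq0$ (from the mean curvature ODE and the sign of $nt_0^{n-1}-(n-1)t_0^n-1$, equivalently that $Q'$ does not vanish off $t=1$) and on carefully tracking the $t$-derivatives of the endpoint function $b(r,t)$, which is precisely where the minimal order $k$ of a nonvanishing derivative of $\eta$ at $t_0$ enters; everything else is a direct consequence of Theorem \ref{ConsLinMC} or a short computation.
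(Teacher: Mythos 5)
Your proposal is correct and follows essentially the same route as the paper: the endpoint identities you derive by differentiating $u_z(0;r,t)=0$ and $u_z(b(r,t);r,t)=0$ are exactly the paper's equations (\ref{Dud1})--(\ref{Dud2}) and their higher $t$-derivatives, your nondegeneracy checks $v_{zz}(0;t_0)\neq0$ and $v_{zz}(d;t_0)\neq0$ for $t_0\neq1$ match the paper's use of the monotonicity of $tQ(t)$ and $Q(t)$, and the final weighted-integral step via $\int_0^d v_t v^{n-1}\,dz=V'(t_0)/(n\om_n)$ is identical. The only cosmetic difference is that you treat $k=1$ and $k\geq2$ uniformly where the paper separates the cases.
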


\begin{proof}
We consider the three cases found in Theorem \ref{ConsLinMC} separately and determine when there is a non-trivial solution in $X_t$.

When $\eta'(t_0)\neq0$ we have $w(z)=\al u_r(z;t_0,t_0)+\be u_z(z;t_0,t_0)+\frac{C}{\eta'(t_0)}u_t(z;t_0,t_0)$ for some $\al,\be,C\in\RR$ and so
\[
w'(z)=\al u_{rz}(z;t_0,t_0)+\be u_{zz}(z;t_0,t_0)+\frac{C}{\eta'(t_0)}u_{tz}(z;t_0,t_0).
\]
Using that $u_z(0;r,t)=0$ for any $r,t>0$ we therefore have $w'(0)=\be u_{zz}(0;t_0,t_0)$, and $u_{zz}(0;t_0,t_0)=\left(\frac{(n-1)P(t_0)}{dt_0}-\eta(t_0)\right)=\eta(t_0)\left(\frac{n-1}{nt_0Q(t_0)}-1\right)$ using the CMC equation. So $u_{zz}(0;t_0,t_0)=0$ if and only if $t_0=1$ (using that $tQ(t)$ is increasing), which is excluded from this case, and hence we require $\be=0$. Next we use that since $u_z\left(\frac{\eta(r)d}{\eta(t)};r,t\right)=0$ we have
\begin{equation}\label{Dud1}
u_{zz}\left(\frac{\eta(r)d}{\eta(t)};r,t\right)\frac{\eta'(r)d}{\eta(t)}+u_{zr}\left(\frac{\eta(r)d}{\eta(t)};r,t\right)=0,
\end{equation}
and
\begin{equation}\label{Dud2}
-u_{zz}\left(\frac{\eta(r)d}{\eta(t)};r,t\right)\frac{\eta(r)\eta'(t)d}{\eta(t)^2}+u_{zt}\left(\frac{\eta(r)d}{\eta(t)};r,t\right)=0,
\end{equation}
to conclude that
\[
w'(d)=-\frac{\al\eta'(t_0)d}{\eta(t_0)}u_{zz}(d;t_0,t_0)+\frac{Cd}{\eta(t_0)}u_{zz}(d;t_0,t_0)=\frac{d(C-\al\eta'(t_0))}{\eta'(t_0)}u_{zz}(d;t_0,t_0).
\]
We use that $u_{zz}(d;t_0,t_0)=\frac{(n-1)P(t_0)}{d}-\eta(t_0)=\eta(t_0)\left(\frac{n-1}{nQ(t_0)}-1\right)$, which again is zero only in the excluded $t_0=1$ case (since $Q(t)$ is decreasing), to conclude that $\al=\frac{C}{\eta'(t_0)}$. Hence $w(z)=\frac{C}{\eta'(t_0)}\left(u_{r}(z;t_0,t_0)+u_{t}(z;t_0,t_0)\right)=\frac{C}{\eta'(t_0)}v_{t}(z;t_0)$ with $C\neq0$. We now calculate the weighted integral:
\[
\int_0^d\frac{C}{\eta'(t_0)}v_t(z;t_0)v(z;t_0)^{n-1}\,dz=\frac{C}{n\om_n\eta'(t_0)}V'(t_0),
\]
and hence we require $V'(t_0)=0$ for the null space to be non-empty, in which case a spanning function is $v_{t}(z;t_0)$.

Next we consider when $\eta'(t_0)=0$ and $t_0\neq1$, so
\[
w(z)=\al u_r(z;t_0,t_0)+\be u_z(z;t_0,t_0)+\frac{C}{\eta^{(k)}(t_0)}u_{t^k}(z;t_0,t_0).
\]
As above we use that $w'(z)=\al u_{rz}(z;t_0,t_0)+\be u_{zz}(z;t_0,t_0)+\frac{C}{\eta^{(k)}(t_0)}u_{t^kz}(z;t_0,t_0)$ to obtain $w'(0)=\be u_{zz}(0;t_0,t_0)$ and conclude that $\be=0$. By differentiating (\ref{Dud2}) another $k-1$ times and using that $\eta^{(i)}(t_0)=0$ for all $i=1,\ldots,k-1$ we obtain
\[
-u_{zz}(d;t_0,t_0)\frac{\eta^{(k)}(t_0)d}{\eta(t_0)}+u_{zt^k}(d;t_0,t_0)=0,
\]
and hence by also using (\ref{Dud1}) we obtain
\[
w'(d)=-\frac{\al\eta'(t_0)d}{\eta(t_0)}u_{zz}(d;t_0,t_0)+\frac{Cd}{\eta(t_0)}u_{zz}(d;t_0,t_0)=\frac{Cd}{\eta(t_0)}u_{zz}(d;t_0,t_0).
\]
Therefore $C=0$, in which case $w(z)=\al u_r(z;t_0,t_0)$ with $\al\neq0$. However, since $u_t(z;t_0,t_0)\equiv0$ we can also write this as $w(z)=\al v_t(z;t_0)$ and obtain $\int_0^dw(z)v(z;t_0)^{n-1}\,dz=\frac{\al}{n\om_n} V'(t_0)$, so we again require $V'(t_0)=0$ for the null space to be non-empty, in which case the spanning function is $v_{t}(z;t_0)$.

Finally, we consider the case $t_0=1$. In this case
\[
w(z)=\al \cos\left(\frac{\pi z}{d}\right)+\be \sin\left(\frac{\pi z}{d}\right) -\frac{Cd^2}{\pi^2}.
\]
From $w'(z)=\frac{\pi}{d}\left(-\al \sin\left(\frac{\pi z}{d}\right)+\be\cos\left(\frac{\pi z}{d}\right)\right)$, we again see that $w'(0)=0$ if and only if $\be=0$, but now $w'(d)=0$ gives no further condition. We consider the weighted integral, using that $v(z;1)=\frac{d\sqrt{n-1}}{\pi}$:
\begin{align*}
\int_0^dw(z)v(z;1)^{n-1}\,dz=&\left(\frac{d\sqrt{n-1}}{\pi}\right)^{n-1}\int_0^d\al\cos\left(\frac{\pi z}{d}\right)-\frac{Cd^2}{\pi^2}\,dz\\
=&-\frac{Cd^{n+2}(n-1)^{\frac{n-1}2}}{\pi^{n+1}},
\end{align*}
and hence we require $C=0$. Therefore, the null space is one dimensional with spanning function $\cos\left(\frac{\pi z}{d}\right)$, since $V'(1)=0$ and $\cos\left(\frac{\pi z}{d}\right)=\frac{2\pi\al}{d\sqrt{n-1}}v_t(z;1)$ we complete the theorem.
\end{proof}

\begin{remark}
From the formula:
\[
\frac{\int A(v)[w]wv^{n-1}\,dz}{\int  w^2\frac{v^{n-1}}{\sqrt{1+v_z^2}}\,dz}=J_v(w)=\frac{\int\left(\frac{w_z^2}{(1+v_z^2)^{\frac32}}-\frac{(n-1)w^2}{v^2\sqrt{1+v_z^2}}\right)v^{n-1}\,dz}{\int  w^2\frac{v^{n-1}}{\sqrt{1+v_z^2}}\,dz},
\]
it is easily seen that all eigenvalues of $\tilde{A}(t)$ are real and form a sequence going to infinity. It also shows that they are greater than or equal to $\frac{-(n-1)}{\min_{z\in[0,d]}v(\cdot;t)^2}$. This lower bound and Chapter 4 Theorem 3.16 in Kato \cite{Kato76} ensures that for any eigenvalue $\lambda_j(t)$ that changes sign, it must do so by moving through $0$.
\end{remark}


\section{Variation of a Zero Eigenvalue}\label{SecVary}
In this section we determine how an eigenvalue of $\tilde{A}$ that becomes zero, varies in a neighbourhood of its zero value. To do this we need to alter the operators so that they are all defined on the same domain. To this end we introduce the projections
\[
W(t)[u]:=u(z)-\frac{\int_0^du(z)v(z;t)^{n-1}\,dz}{\int_0^dv(z;t)^{n-1}\,dz},
\]
which are bijections between the projected spaces $X_t$, that is $W(t):X_r\to X_t$ is a bijection for any $r,t>0$. Also note that $W(r)\circ W(t)=W(r)$. Now we define the family of operators $B(t):X_1\to X_1$ given by:
\[
B(t)=W(1)\circ \tilde{A}(t)\circ W(t)=W(1)\circ DH(v(\cdot;t))\circ W(t),
\]
and note that since $\tilde{A}(t)=W(t)\circ DH(v(\cdot;t))$, we have $\tilde{A}(t)=W(t)\circ B(t)\circ W(1)$.
\begin{lemma}\label{SpecB}
$B(t)$ has the same spectrum as $\tilde{A}(t)$.
\end{lemma}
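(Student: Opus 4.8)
The plan is to recognise $B(t)$ as a similarity transform (conjugation) of $\tilde A(t)$ by an explicit isomorphism between the domains $X_t$ and $X_1$, after which the assertion reduces to the invariance of the spectrum under conjugation.

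First I would check that $W(t)$ restricts to a linear isomorphism $P_t:=W(t)|_{X_1}\colon X_1\to X_t$ whose inverse is $P_t^{-1}=W(1)|_{X_t}\colon X_t\to X_1$. Two facts already recorded in the text make this immediate: the identity $W(r)\circ W(t)=W(r)$, which in particular yields $W(t)\circ W(1)=W(t)$ and $W(1)\circ W(t)=W(1)$; and the observation that $W(s)$ acts as the identity on $X_s$, since for $u\in X_s$ the subtracted weighted mean vanishes. Composing, $P_t^{-1}\circ P_t=(W(1)\circ W(t))|_{X_1}=W(1)|_{X_1}=\mathrm{id}_{X_1}$, and symmetrically $P_t\circ P_t^{-1}=\mathrm{id}_{X_t}$. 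I would also note that $W(t)$ differs from the identity by a bounded rank-one operator and changes neither the regularity nor the boundary conditions $w_z|_{z=0}=w_z|_{z=d}=0$, so $P_t$ is a topological isomorphism on the space on which the operators are realised, not merely an algebraic one.

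Then I would simply rewrite $B(t)=W(1)\circ\tilde A(t)\circ W(t)=P_t^{-1}\circ\tilde A(t)\circ P_t$, using that $\tilde A(t)$ maps $X_t$ into $X_t$ (consistency with the relation $\tilde A(t)=W(t)\circ B(t)\circ W(1)$ already noted is then a one-line check). Since $P_t$ is an isomorphism, $B(t)-\lambda I$ and $\tilde A(t)-\lambda I$ are invertible for exactly the same $\lambda$, with $(B(t)-\lambda I)^{-1}=P_t^{-1}(\tilde A(t)-\lambda I)^{-1}P_t$; hence $\sigma(B(t))=\sigma(\tilde A(t))$. If convenient I would additionally spell out the eigenvector correspondence: $\tilde A(t)w=\lambda w$ with $0\neq w\in X_t$ if and only if $B(t)\,(W(1)[w])=\lambda\,W(1)[w]$ with $0\neq W(1)[w]\in X_1$, together with the induced bijection of generalised eigenspaces — this being the form in which the lemma is used in Section \ref{SecVary}.

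I do not expect a genuine obstacle; the whole content is the ``conjugation preserves the spectrum'' principle. The only place needing a little care is the functional-analytic bookkeeping in the second paragraph — verifying that $P_t$ is bounded with bounded inverse on whatever completion or graph-norm domain the spectrum is taken on, so that the equality of spectra holds in the full resolvent sense and not merely for the point spectrum.
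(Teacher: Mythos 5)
Your proposal is correct and takes essentially the same approach as the paper: the paper's proof is exactly the eigenfunction form of your conjugation argument, using that $W(t)|_{X_1}$ and $W(1)|_{X_t}$ are mutually inverse bijections (via $W(r)\circ W(t)=W(r)$ and $W(s)=\mathrm{id}$ on $X_s$) intertwining $\tilde{A}(t)$ and $B(t)$. The only difference is cosmetic: the paper records the correspondence at the level of eigenfunctions rather than resolvents, which suffices here since the spectrum consists of a discrete sequence of eigenvalues.
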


\begin{proof}
This follows directly from the formulas above. If $u\in X_t$ is an eigenfunction of $\tilde{A}(t)$, then $W(1)[u]\in X_1\backslash\{0\}$ is an eigenfunction of $B(t)$ with the same eigenvalue and similarly if $u\in X_1$ is an eigenfunction of $B(t)$, then $W(t)[u]\in X_t\backslash\{0\}$ is an eigenfunction of $\tilde{A}(t)$ with the same eigenvalue.
\end{proof}

By Corollary \ref{NullA} any zero eigenvalue of $B(t)$ has multiplicity $1$. To see that it is in fact simple we ensure that a spanning vector, $W(1)[v_t(\cdot;t)]$, is not in the range. This is not quite trivial since $B(t)$ is not self adjoint. However, a solution $\bar{w}\in X_1$ to the differential equation $W(1)[DH(v(z;t_0))[W(t_0)[\bar{w}]]]=W(1)[v_t(z;t_0)]$ exists if and only if $W(t_0)[DH(v(z;t_0))[W(t_0)[\bar{w}]]]=v_t(z;t_0)$ (using that $v_t(\cdot;t_0)\in X_{t_0}$). The operator $W(t_0)\circ DH(v(\cdot;t_0))\circ W(t_0)$ is self adjoint on $X_{t_0}$ and $v_t(\cdot;t_0)$ is an element of its null space, so the differential equations have no solutions. Therefore, any zero eigenvalue of $B(t)$ is simple.

To prove the existence of continuously differentiable eigenvalues and eigenfunctions we will use Proposition I.7.2 of \cite{Kielhofer12}. For this we need to find a function for which $B(t)$ is the linearisation. We define $\bar{v}(\cdot;t)=W(1)[v(\cdot;t)]$ and start with a lemma that we prove in Section \ref{PsiSec}.

\begin{lemma}\label{PsiExist}
For each $t>0$ there exist open neighbourhoods of $\bar{v}(\cdot;t)$, $U_{t}\subset X_1$, and $t$, $V_{t}\subset\RR$, and a function $\psi_{t}:U_{t}\times V_{t}\to X_t$ such that
\begin{itemize}
\item $W(1)[\psi_{t}(\bar{u},r)]=\bar{u}$ for all $(\bar{u},r)\in U_{t}\times V_{t}$,
\item $\om_n\int_0^d\psi_{t}(\bar{u},r)^n\,dz=V(r)$ for all $(\bar{u},r)\in U_{t}\times V_{t}$, and
\item $W(1)[u]=\bar{u}$ and $\int_0^du(z)^n\,dz=V(r)$ for $(u,\bar{u},r)\in R(\psi_{t})\times U_{t}\times V_{t}$ if and only if $u=\psi_{t}(\bar{u},r)$.
\end{itemize}
In particular, there exists a neighbourhood of $t$, $I_t\subset\RR$, such that $\psi_{t}(\bar{v}(\cdot;r),r)=v(\cdot;r)$ for all $r\in I_t$.

Furthermore,
\[
D_{\bar{u}}\psi_{t}(\bar{u},r)[\bar{w}]=\bar{w}-\frac{\int_0^d\bar{w}(z)\psi_{t}(\bar{u},r)(z)^{n-1}\,dz}{\int_0^d\psi_{t}(\bar{u},r)(z)^n\,dz},
\]
for all $(\bar{u},r)\in U_{t}\times V_{t}$ and $\bar{w}\in X_1$. In particular, $D_{\bar{u}}\psi_{t}(\bar{v}(\cdot;r),r)[\bar{w}]=W(r)[\bar{w}]$ for all $\bar{w}\in X_1$ and $r\in I_t$.
\end{lemma}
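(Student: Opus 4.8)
\emph{Proof proposal.} The plan is to construct $\psi_t$ by hand, exploiting that $v(\cdot;1)=\frac{d\sqrt{n-1}}{\pi}$ is constant, so that $W(1)$ is nothing but subtraction of the mean, $W(1)[u]=u-\frac1d\int_0^d u\,dz$. Hence the requirement $W(1)[\psi_t(\bar{u},r)]=\bar{u}$ forces $\psi_t(\bar{u},r)$ to differ from $\bar{u}$ by a constant, and I would simply set $\psi_t(\bar{u},r):=\bar{u}+c(\bar{u},r)$, where the scalar $c(\bar{u},r)$ --- automatically the mean of $\psi_t(\bar{u},r)$, since $\bar{u}$ has mean zero --- is chosen so that $\om_n\int_0^d(\bar{u}+c)^n\,dz=V(r)$. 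Thus the whole lemma reduces to producing this one real number as a suitably regular function of $(\bar{u},r)$ near $(\bar{v}(\cdot;t),t)$, together with a local uniqueness statement.

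For this I would apply the implicit function theorem to $G(\bar{u},r,c):=\om_n\int_0^d(\bar{u}(z)+c)^n\,dz-V(r)$ at the reference point $\bar{u}=\bar{v}(\cdot;t)$, $r=t$, $c=c_0:=\frac1d\int_0^d v(z;t)\,dz$, where $\bar{u}+c_0=v(\cdot;t)$ and hence $G=\om_n\int_0^d v(z;t)^n\,dz-V(t)=0$. The partial derivative $\pa_cG=n\om_n\int_0^d(\bar{u}+c)^{n-1}\,dz$ equals $n\om_n\int_0^d v(z;t)^{n-1}\,dz>0$ at the reference point because $v(\cdot;t)>0$; it therefore stays positive nearby, and the theorem yields neighbourhoods $U_t\subset X_1$ of $\bar{v}(\cdot;t)$ and $V_t\subset\RR$ of $t$ and a map $c:U_t\times V_t\to\RR$, of the same regularity as $V$, with $G(\bar{u},r,c(\bar{u},r))\equiv0$, $c(\bar{v}(\cdot;t),t)=c_0$, and $c(\bar{u},r)$ the unique zero of $G(\bar{u},r,\cdot)$ in a fixed neighbourhood of $c_0$. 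After shrinking $U_t,V_t$ so that $\bar{u}+c(\bar{u},r)>0$ throughout, $c\mapsto\om_n\int_0^d(\bar{u}+c)^n\,dz$ is strictly increasing on the relevant range, which upgrades this to the uniqueness needed below.

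Then I would read off the stated properties from $\psi_t(\bar{u},r):=\bar{u}+c(\bar{u},r)$: the first bullet is immediate since $W(1)$ annihilates constants; the second holds by the defining equation for $c$; and the third follows from local uniqueness of the constant, since any $u\in R(\psi_t)$ obeying the two constraints is of the form $\bar{u}+c'$ with $c'$ its own mean (from $W(1)[u]=\bar{u}$) and $c'$ a zero of $G(\bar{u},r,\cdot)$ (from the volume constraint), lying near $c_0$ because the means of all functions in $R(\psi_t)$ do once $U_t,V_t$ are small, hence $c'=c(\bar{u},r)$. For the ``in particular'' statement, for $r$ close enough to $t$ we have $\bar{v}(\cdot;r)\in U_t$ and the constant $\frac1d\int_0^d v(z;r)\,dz$ satisfies both defining relations, so by uniqueness it equals $c(\bar{v}(\cdot;r),r)$ and $\psi_t(\bar{v}(\cdot;r),r)=v(\cdot;r)$. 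Finally, differentiating $G(\bar{u},r,c(\bar{u},r))\equiv0$ in $\bar{u}$ along $\bar{w}\in X_1$ gives $\int_0^d\psi_t(\bar{u},r)^{n-1}\big(\bar{w}+D_{\bar{u}}c(\bar{u},r)[\bar{w}]\big)\,dz=0$, which --- as $D_{\bar{u}}c(\bar{u},r)[\bar{w}]$ is a scalar --- produces the displayed formula for $D_{\bar{u}}\psi_t(\bar{u},r)[\bar{w}]=\bar{w}+D_{\bar{u}}c(\bar{u},r)[\bar{w}]$; at $\bar{u}=\bar{v}(\cdot;r)$, where $\psi_t=v(\cdot;r)$, this collapses to $W(r)[\bar{w}]$.

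I expect the only step with genuine content to be the non-degeneracy underlying the implicit function theorem, i.e.\ that $c\mapsto\om_n\int_0^d(\bar{u}+c)^n\,dz$ is a local diffeomorphism near $c_0$; this is where the positivity of the reference profile $v(\cdot;t)$ does the work, and the one point to watch is choosing $U_t$ and $V_t$ small enough that $\bar{u}+c(\bar{u},r)$ stays uniformly positive, which simultaneously secures the invertibility and the monotonicity used for the uniqueness statement. Everything else --- the regularity of $\psi_t$, inherited from that of $V$ (the paper works with $V\in C^2$), and the derivative identity, obtained by differentiating the constraint --- should be routine bookkeeping.
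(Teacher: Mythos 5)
Your proof is correct, but it takes a more elementary route than the paper. The paper applies an implicit function theorem in Banach spaces to the map $\Phi(\bar{u},r,u)=\bigl(W(1)[u]-\bar{u},\ \om_n\int_0^d u^n\,dz-V(r)\bigr)$, the real work being the verification that $D_u\Phi$ at $(\bar{v}(\cdot;t),t,v(\cdot;t))$ is an isomorphism, which is done via the projection identity $w=(I-W(t))[w]+W(t)[W(1)[w]]$. You instead observe that $v(\cdot;1)$ is constant, so $W(1)$ is just subtraction of the mean; hence the constraint $W(1)[u]=\bar{u}$ is solved exactly by $u=\bar{u}+c$, and the whole lemma collapses to a scalar implicit function theorem for the constant $c$, with non-degeneracy reduced to the positivity of $n\om_n\int_0^d v(z;t)^{n-1}\,dz$. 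This buys a more transparent uniqueness argument (monotonicity of $c\mapsto\om_n\int_0^d(\bar{u}+c)^n\,dz$ once positivity of $\bar{u}+c$ is secured by shrinking the neighbourhoods) at the cost of relying on the special structure of $W(1)$, whereas the paper's argument would go through verbatim with any base projection. Both routes give the same derivative computation; note that your differentiation of the constraint yields the denominator $\int_0^d\psi_t(\bar{u},r)^{n-1}\,dz$, which is what the paper's own proof produces and what is needed for the conclusion $D_{\bar{u}}\psi_t(\bar{v}(\cdot;r),r)=W(r)$ (the exponent $n$ in the displayed formula of the lemma is a typo), and your construction satisfies the Neumann boundary conditions automatically since adding a constant does not disturb them; the stated codomain $X_t$ is an abuse of notation in the paper as well, since $v(\cdot;r)$ itself does not have weighted mean zero.
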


We now define the function $F_t:U_t\times V_t\to X_1$ by $F_{r,t}(\bar{u},r)=W(1)[H(\psi_{t}(\bar{u},r))]$. It is easily seen that $F_t(\bar{v}(\cdot;r),r)=0$ and $D_{\bar{u}}F_t(\bar{v}(\cdot;r),r)[\bar{w}]=B(t)[\bar{w}]$. Therefore by Proposition I.7.2 of \cite{Kielhofer12} the zero eigenvalue will vary smoothly with $r$ in a neighbourhood of $t$. That is, let $t_0>0$ be such that $V'(t_0)=0$ then there exists an open neighbourhood of $t_0$, $J_{t_0}\subset I_{t_0}$, and a smoothly varying family of functions $w:X_1\times J_{t_0}\to\RR$, such that $\la\in C^{\infty}(J_{t_0},\RR)$, $\la(t_0)=0$, $w(\cdot;t_0)=0$ and
\[
B(t)[\bar{v}_t(z;t_0)+w(z;t)]=\la(t)(\bar{v}_t(z;t_0)+w(z;t)).
\]
We can also take $\int_0^d\bar{v}_t(z;t_0)w(z;t)\,dt=0$ for all $t\in J_{t_0}$.

\begin{lemma}\label{Dla}
\[
\la'(t_0)=\frac{-V''(t_0)\eta'(t_0)}{n\om_n\int_0^dv_t(z;t_0)^2v(z;t_0)^{n-1}\,dz}
\]
\end{lemma}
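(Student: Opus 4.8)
The plan is to pull the eigenvalue problem back to the $t$-dependent space $X_t$, where the operator is self-adjoint, differentiate the eigenvalue relation at $t_0$, and pair against the null vector. Write $\mathcal{L}(t):=DH(v(\cdot;t))$ and $\langle f,g\rangle_t:=\int_0^df\,g\,v(\cdot;t)^{n-1}\,dz$. As recorded in Section \ref{SecStable}, $\mathcal{L}(t)$ is formally self-adjoint with respect to $\langle\cdot,\cdot\rangle_t$ on functions satisfying the free boundary condition, and $W(t)$ is the $\langle\cdot,\cdot\rangle_t$-orthogonal projection onto $X_t$, so $\tilde{A}(t)=W(t)\circ\mathcal{L}(t)$ is self-adjoint on $X_t$. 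Two preliminary facts are needed: (i) $W(t)[c]\equiv 0$ for every constant $c$, hence $W'(t_0)[1]=0$, where $W'(t_0)$ denotes the $t$-derivative of the operator family $W(t)$; and (ii) setting $r=t$ in (\ref{HuDerivs}) and using $v(\cdot;t)=u(\cdot;t,t)$, so that $v_t(\cdot;t)=u_r(\cdot;t,t)+u_t(\cdot;t,t)$, yields the identity $\mathcal{L}(t)[v_t(\cdot;t)]=\eta'(t)$ for all $t>0$. Set $\Phi(t):=W(t)[\bar{v}_t(\cdot;t_0)+w(\cdot;t)]\in X_t$; by Lemma \ref{SpecB} this is an eigenfunction of $\tilde{A}(t)$ with eigenvalue $\la(t)$, and since $v_t(\cdot;t_0)\in X_{t_0}$ (the constraint $\int_0^dv_t(\cdot;t_0)v(\cdot;t_0)^{n-1}\,dz=V'(t_0)/(n\om_n)=0$ holds and $v_{tz}$ vanishes at the endpoints for every $t$) together with $W(t_0)\circ W(1)=W(t_0)$, we have $\Phi(t_0)=v_t(\cdot;t_0)$.

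Differentiating (ii) gives $\mathcal{L}'(t_0)[v_t(\cdot;t_0)]=\eta''(t_0)-\mathcal{L}(t_0)[v_{tt}(\cdot;t_0)]$, where $\mathcal{L}'(t_0)$ is the $t$-derivative of the operator family applied to the frozen function. Differentiating $\tilde{A}(t)[\Phi(t)]=\la(t)\Phi(t)$ at $t_0$, the operator-derivative term is
\[
\tilde{A}'(t_0)[v_t(\cdot;t_0)]=W'(t_0)\big[\mathcal{L}(t_0)[v_t(\cdot;t_0)]\big]+W(t_0)\big[\mathcal{L}'(t_0)[v_t(\cdot;t_0)]\big],
\]
where by (ii) the argument of $W'(t_0)$ is the constant $\eta'(t_0)$, killed by (i), while the second term equals $W(t_0)\big[\eta''(t_0)-\mathcal{L}(t_0)[v_{tt}(\cdot;t_0)]\big]=-\tilde{A}(t_0)[v_{tt}(\cdot;t_0)]$. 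Using $\la(t_0)=0$ and $\Phi(t_0)=v_t(\cdot;t_0)$ on the right-hand side, the differentiated equation collapses to $\tilde{A}(t_0)[\Phi'(t_0)-v_{tt}(\cdot;t_0)]=\la'(t_0)\,v_t(\cdot;t_0)$.

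I would then pair this against $v_t(\cdot;t_0)$ in $\langle\cdot,\cdot\rangle_{t_0}$. Because $v_t(\cdot;t_0)$ has zero $\langle\cdot,\cdot\rangle_{t_0}$-mean, the outer projection in $\tilde{A}(t_0)=W(t_0)\circ\mathcal{L}(t_0)$ drops out of the pairing, and self-adjointness of $\mathcal{L}(t_0)$ — the Neumann boundary terms vanish since $\Phi'(t_0)$, $v_{tt}(\cdot;t_0)$, and $v_t(\cdot;t_0)$ all obey the free boundary condition — transfers the operator onto the null vector, so the left-hand side becomes $\langle\Phi'(t_0)-v_{tt}(\cdot;t_0),\mathcal{L}(t_0)[v_t(\cdot;t_0)]\rangle_{t_0}=\eta'(t_0)\langle\Phi'(t_0)-v_{tt}(\cdot;t_0),1\rangle_{t_0}$ and the right-hand side is $\la'(t_0)\langle v_t(\cdot;t_0),v_t(\cdot;t_0)\rangle_{t_0}$. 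The remaining scalar is found by two differentiations: from $\langle\Phi(t),1\rangle_t=0$ one gets $\langle\Phi'(t_0),1\rangle_{t_0}=-(n-1)\int_0^dv_t(\cdot;t_0)^2v(\cdot;t_0)^{n-2}\,dz$, and differentiating $V(t)=\om_n\int_0^dv(\cdot;t)^n\,dz$ twice gives $\langle v_{tt}(\cdot;t_0),1\rangle_{t_0}=\frac{V''(t_0)}{n\om_n}-(n-1)\int_0^dv_t(\cdot;t_0)^2v(\cdot;t_0)^{n-2}\,dz$; subtracting, the awkward integrals cancel and $\langle\Phi'(t_0)-v_{tt}(\cdot;t_0),1\rangle_{t_0}=-\frac{V''(t_0)}{n\om_n}$. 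Dividing by $\langle v_t(\cdot;t_0),v_t(\cdot;t_0)\rangle_{t_0}=\int_0^dv_t(\cdot;t_0)^2v(\cdot;t_0)^{n-1}\,dz$ produces the asserted formula.

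I expect the only real obstacle to be careful bookkeeping — keeping the difference between differentiating the operator family $\tilde{A}(t)$ at a frozen input and differentiating along the moving eigenfunction $\Phi(t)$ straight — together with the justification that $\Phi$ is $C^1$ in $t$, so that $\Phi'(t_0)$ is meaningful; this follows from the smooth dependence provided by Proposition I.7.2 of \cite{Kielhofer12} and the smoothness of $t\mapsto W(t)$, even though the only property of $\Phi'(t_0)$ actually used is the value of $\langle\Phi'(t_0),1\rangle_{t_0}$. Everything else is routine integration by parts. Note finally that the denominator is strictly positive, so $\la'(t_0)$ has the sign of $-V''(t_0)\eta'(t_0)$, which is nonzero under the standing hypotheses $V''(t_0)\neq0$ and $\eta'(t_0)<0$; this is exactly the transversal crossing needed to run the stability dichotomy of Theorem \ref{stability}.
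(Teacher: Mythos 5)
Your proof is correct and follows essentially the same route as the paper: differentiate the smoothly varying eigenvalue relation at $t_0$, use $DH(v)[v_t]=\eta'$ and $D^2H(v)[v_t,v_t]+DH(v)[v_{tt}]=\eta''$ to trade the operator-derivative term for $-\tilde{A}(t_0)[v_{tt}]$, then pair against $v_t(\cdot;t_0)$ using self-adjointness of $DH$ with weight $v^{n-1}$ and the second derivative of $V$. The only difference is bookkeeping: you conjugate to the moving space via $\Phi(t)=W(t)[\bar{v}_t+w(\cdot;t)]$ so the projection-derivative term acts on a constant and vanishes, with $V''(t_0)$ entering through the differentiated mean-zero constraint, whereas the paper stays on $X_1$ and computes $W'(t_0)[v_t(\cdot;t_0)]$ explicitly from the formulas for $V'$ and $V''$ — the same cancellation in a different order.
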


\begin{proof}
We replace $B(t)$ with $W(1)\circ DH(v(\cdot;t))\circ W(t)$ and use that $W(t)[\bar{v}_t(z;t_0)]=W(t)[v_t(z;t_0)]$:
\[
W(1)[DH(v(z;t))[W(t)[v_t(z;t_0)+w(z;t)]]]=\la(t)(\bar{v}_t(z;t_0)+w(z;t)).
\]
Calculating its $t$ derivative at $t=t_0$ gives:
\begin{align*}
W&(1)[D^2H(v(z;t_0))[v_t(z;t_0),v_t(z;t_0)]]\\
&+W(1)[DH(v(z;t_0))[W'(t_0)[v_t(z;t_0)]+W(t_0)[w_t(z;t_0)]]]=\la'(t_0)\bar{v}_t(z;t_0).
\end{align*}
We apply $W(t_0)$ to the equation to obtain
\begin{align*}
W&(t_0)[D^2H(v(z;t_0))[v_t(z;t_0),v_t(z;t_0)]]\\
&+W(t_0)[DH(v(z;t_0))[W'(t_0)[v_t(z;t_0)]+W(t_0)[w_t(z;t_0)]]]=\la'(t_0)v_t(z;t_0).
\end{align*}

Next we use that since $H(v(z;t))=\eta(t)$, then $DH(v(z;t))[v_t(z;t)]=\eta'(t)$ and $D^2H(v(z;t)[v_t(z;t),v_t(z;t)]+DH(v(z;t))[v_{tt}(z,t)]=\eta''(t)$, to obtain
\begin{equation}\label{EvalueEqD}
W(t_0)[DH(v(z;t_0))[W'(t_0)[v_t(z;t_0)]+W(t_0)[w_t(z;t_0)]-v_{tt}(z;t_0)]]=\la'(t_0)v_t(z;t_0),
\end{equation}
where we used that $W(t_0)[\eta''(t_0)]=0$.

We now consider $V(t)=\om_n\int_0^dv(z;t)^n\,dz$. From this we see that
\[
V'(t)=n\om_n(I-W(t))[v_t(z;t)]\int_0^dv(z;t)^{n-1}\,dz,
\]
and
\begin{align*}
V''(t)=&n\om_n\left(\left(I-W(t)\right)[v_{tt}(z;t)]-W'(t)[v_t(z;t)]\right)\int_0^dv(z;t)^{n-1}\,dz\\
&+n(n-1)\om_n(I-W(t))[v_t(z;t)]\int_0^dv_t(z;t)v(z;t)^{n-2}\,dz.
\end{align*}
Hence
\[
V''(t_0)=n\om_n\left(\left(I-W(t_0)\right)[v_{tt}(z;t_0)]-W'(t_0)[v_t(z;t_0)]\right)\int_0^dv(z;t_0)^{n-1}\,dz,
\]
and
\[
W'(t_0)[v_t(z;t_0)]=(I-W(t_0))[v_{tt}(z;t_0)]-\frac{V''(t_0)}{n\om_n\int_0^dv(z;t_0)^{n-1}\,dz}.
\]
Substituting this into (\ref{EvalueEqD}) gives
\[
W(t_0)[DH(v(z;t_0))[W(t_0)[w_t(z;t_0)-v_{tt}(z;t_0)-\frac{V''(t_0)}{n\om_n\int_0^dv(z;t_0)^{n-1}\,dz}]]]=\la'(t_0)v_t(z;t_0).
\]
Multiplying by $v_t(z;t_0)v(z;t_0)^{n-1}$ and integrating gives:
\begin{align*}
\la'(t_0)=&\frac{\int_0^dDH(v(z;t_0))[W(t_0)[w_t(z;t_0)-v_{tt}(z;t_0)]]v_t(z;t_0)v(z;t_0)^{n-1}\,dz}{\int_0^dv_t(z;t_0)^2v(z;t_0)^{n-1}\,dz}\\
&-\frac{V''(t_0)\int_0^dDH(v(z;t_0))[1]v_t(z;t_0)v(z;t_0)^{n-1}\,dz}{n\om_n\int_0^dv(z;t_0)^{n-1}\,dz\int_0^dv_t(z;t_0)^2v(z;t_0)^{n-1}\,dz},
\end{align*}
where the projections vanish due to $v_t(z;t_0)\in X_{t_0}$. Using the self adjointness of $DH(v(\cdot;t))$ (with respect to the weight $v(\cdot;t_0)$, for functions satisfying the free boundary condition) and
\[
DH(v(z;t_0))[v_t(z;t_0)]=\eta'(t_0),
\]
this becomes:
\begin{align*}
\la'(t_0)=&\frac{\eta'(t_0)\int_0^dW(t_0)[w_t(z;t_0)-v_{tt}(z;t_0)]v(z;t_0)^{n-1}\,dz}{\int_0^dv_t(z;t_0)^2v(z;t_0)^{n-1}\,dz}\\
&-\frac{\eta'(t_0)V''(t_0)\int_0^dv(z;t_0)^{n-1}\,dz}{n\om_n\int_0^dv(z;t_0)^{n-1}\,dz\int_0^dv_t(z;t_0)^2v(z;t_0)^{n-1}\,dz},
\end{align*}
and the result follows since $W(t_0)$ projects into $X_{t_0}$.
\end{proof}

We can now prove the main theorem.
\proofofmaintheorem
We start by labeling the critical points of $V$ as $t_0=1$, $t_1$, $t_2$, etc. in decreasing order, i.e. $V'(t_i)=0$ and $0<t_{i+1}<t_i$ for all $i$.

Suppose at some $\tau\in(t_{i+1},t_{i})$ the eigenvalues of $\tilde{A}(\tau)$ are all positive and $V'(\tau)>0$. Both these things must remain true for $\tilde{A}(t)$ for all $t\in(t_{i+1},t_i)$. Therefore immediately after the critical point at $t_{i+1}$, $V'(t)>0$ and, since we assume no critical points of $V$ are degenerate, $t_{i+1}$ must be a minimum of $V$ and hence $V''(t_{i+1})>0$. From Lemma \ref{Dla} and since we assume $\eta'(t_0)<0$ this means that the critical eigenvalue (with multiplicity one) is increasing at this point and hence is negative immediately prior to $t_{i+1}$. Thus for $t\in(t_{i+2},t_{i+1})$ we have that $V'(t)<0$ and $\tilde{A}(t)$ has a single negative eigenvalue.

Next suppose at some $\tau\in(t_{i+1},t_{i})$, $\tilde{A}(\tau)$ has a single negative eigenvalue, the rest are strictly positive, and $V'(\tau)<0$. These things must remain true for $\tilde{A}(t)$ for all $t\in(t_{i+1},t_i)$. Therefore immediately after the critical point $t_{i+1}$, $V'(t)<0$ and, since we assume no critical points of $V$ are degenerate, $t_{i+1}$ must be a maximum of $V$ and hence $V''(t_{i+1})<0$. From Lemma \ref{Dla} and since we assume $\eta'(t_0)<0$ this means that the critical eigenvalue (with multiplicity one) is decreasing at this point and hence is positive immediately prior to $t_{i+1}$. Thus for $t\in(t_{i+2},t_{i+1})$ we have that $V'(t)>0$ and $\tilde{A}(t)$ has only positive eigenvalues.

As these two cases alternate, we have covered all cases if we can show that on some interval $(a,1)$ one of them is true. However, this was proved in \cite{Hartley16}. In fact, there it was shown that for $2\leq n\leq10$, $V(t)$ has a local minimum and the critical eigenvalue has a local maximum at $t=1$, so that there exists $\ep>0$ such that for $t\in(1-\ep,1)$, $\tilde{A}(t)$ has a single negative eigenvalue and $V'(t)<0$. While for $n\geq11$, $V(t)$ has a local maximum and the critical eigenvalue has a local minimums at $t=1$, so that there exists $\ep>0$ such that for $t\in(1-\ep,1)$, $\tilde{A}(t)$ has only positive eigenvalues and $V'(t)>0$. By Proposition \ref{EquivStable}, stability of the hypersurface defined by $v(\cdot;t)$ follows from $\tilde{A}(t)$ having only positive eigenvalues and it is also clear that if $\tilde{A}(t)$ has negative eigenvalues then the hypersurface is unstable. Hence we have proved the result. \hfill $\Box$

\begin{remark}
In \cite{Hartley16} the details are given for the operator which is the negative of $\tilde{A}(t)$, hence the eigenvalue signs are switched. Also, instead of using $V(t)$, the eigenvalue formulas are written in terms of the function $\frac{C}{V(t)^{\frac{1}{n}}}$. Finally, the family of height functions is parameterised by the different parameter $s=\frac{1-t}{1+t}$.
\end{remark}


\section{Mean Curvature Calculation for the Family of Hypersurfaces}\label{SecFamilyApp}
Here we prove Proposition \ref{FamCMC}. When $r=1$, $u(z;1,t)=\frac{(n-1)d}{nP(t)Q(t)}$ and the proposition follows trivially. We now assume $r\neq1$ and note that
\[
\zeta^{-1}_y(y;t)=\frac{1}{\zeta_x(\zeta^{-1}(y;t);t)}=-R(\zeta^{-1}(y;t);t).
\]
Therefore
\[
u_z(z;r,t)=(1-r)R\left(\zeta^{-1}\left(\frac{Q(t)P(t)z}{Q(r)d};r\right);r\right).
\]
Using $\zeta^{-1}(0;r)=1$ and $\zeta^{-1}(P(r);r)=0$ we obtain $u_z(0;r,t)=(1-r)R(1;r)=0$ and $u_z\left(\frac{Q(r)P(r)d}{Q(t)P(t)};r,t\right)=(1-r)R(0;r)=0$.

Next define $S(x;r):=\frac{(1-(1-r)x)^{n-1}}{1-Q(r)+Q(r)(1-(1-r)x)^n}$, and take its $x$-derivative:
\[
S_x(x;r)=-(1-r)\left(\frac{n-1}{1-(1-r)x}-nQ(r)S(x;r)\right)S(x;r).
\]
Therefore, from $R(x;r)=\frac{1}{|1-r|}\sqrt{S(x;r)^2-1}$ we have:
\begin{align*}
R_x(x;r)=&\frac{S(x;r)S_x(x;r)}{|1-r|\sqrt{S(x;r)^2-1}}\\
=&\frac{-S(x;r)^2}{(1-r)R(x;r)}\left(\frac{n-1}{1-(1-r)x}-nQ(r)S(x;r)\right)\\
=&-\frac{(1-r)^2R(x;r)^2+1}{(1-r)R(x;r)}\left(\frac{n-1}{1-(1-r)x}-nQ(r)\sqrt{(1-r)^2R(x;r)^2+1}\right).
\end{align*}
So that, with $x=\zeta^{-1}\left(\frac{Q(t)P(t)z}{Q(r)d};r\right)$, we have
\begin{align*}
u_{zz}(z;r,t)=&\frac{Q(t)P(t)\left((1-r)^2R(x;r)^2+1\right)}{Q(r)d}\left(\frac{n-1}{1-(1-r)x}-nQ(r)\sqrt{(1-r)^2R(x;r)^2+1}\right)\\
=&\frac{Q(t)P(t)\left(u_z(z;r,t)^2+1\right)}{Q(r)d}\left(\frac{n-1}{\frac{Q(t)P(t)}{Q(r)d}u(z;r,t)}-nQ(r)\sqrt{u_z(z;r,t)^2+1}\right)\\
=&\left(u_z(z;r,t)^2+1\right)\left(\frac{n-1}{u(z;r,t)}-\frac{nQ(t)P(t)}{d}\sqrt{u_z(z;r,t)^2+1}\right).
\end{align*}
The result now follows from the formula for $H(u)$ for rotationally symmetric functions.


\section{Calculation of the Stability Functional}\label{SecStableApp}
\begin{lemma}\label{Jtil}
\[
\tilde{J}_v(w)=\frac{\int_{\SS^{n-1}}\int_0^d \left(\|\nabla w\|_{\Si}^2-\frac{(n-1)w^2}{v^2} \right)\frac{v^{n-1}}{\sqrt{1+v_z^2}}\,dz\,d\mu_{\SS^{n-1}}}{\int_{\SS^{n-1}}\int_0^d w^2\frac{v^{n-1}}{\sqrt{1+v_z^2}}\,dz\,d\mu_{\SS^{n-1}}}
\]
\end{lemma}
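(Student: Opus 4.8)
The plan is to unwind the definition $\tilde{J}_v(w)=J_v(w/\phi)$, where I write $\phi:=\sqrt{1+v_z^2}$, and to reduce the asserted identity to a single pointwise ordinary differential equation for the profile curve $v$; the only genuine input will be that $v$ defines a constant-mean-curvature hypersurface.

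Set $u:=w/\phi$. Since $\phi$ depends on $z$ alone it is constant on each sphere $\{z\}\times\SS^{n-1}$, so $\|\tilde{\nabla}u\|_{\SS^{n-1}}^2=\phi^{-2}\|\tilde{\nabla}w\|_{\SS^{n-1}}^2$; in particular the denominator of $J_v(u)$ is $\int_{\SS^{n-1}}\int_0^d w^2 v^{n-1}\phi^{-1}\,dz\,d\mu_{\SS^{n-1}}$, the claimed denominator. For the numerator, combine the first two terms of the integrand of $J_v$ into the intrinsic gradient $\|\nabla u\|_{\Si}^2=\frac{u_z^2}{1+v_z^2}+\frac{\|\tilde{\nabla}u\|^2}{v^2}$, so that the numerator integrand against $d\mu_{\Si}=v^{n-1}\phi\,dz\,d\mu_{\SS^{n-1}}$ is $(\|\nabla u\|_{\Si}^2-|A|^2u^2)\,d\mu_{\Si}$. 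Differentiating $u=w/\phi$ gives $u_z=\phi^{-1}w_z-\phi^{-2}w\phi_z$, and expanding, together with $\|\tilde{\nabla}u\|^2=\phi^{-2}\|\tilde{\nabla}w\|^2$, yields
\[
\|\nabla u\|_{\Si}^2=\frac{\|\nabla w\|_{\Si}^2}{\phi^2}-\frac{2ww_z\phi_z}{\phi^5}+\frac{w^2\phi_z^2}{\phi^6}.
\]
Multiplying by $d\mu_{\Si}$, the first summand contributes $\|\nabla w\|_{\Si}^2\,v^{n-1}\phi^{-1}$ and the curvature term contributes $-|A|^2w^2v^{n-1}\phi^{-1}$ --- exactly the two terms on the right-hand side of the lemma. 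Hence it remains to show that the integral over $\Si$, against $dz\,d\mu_{\SS^{n-1}}$, of
\[
-\frac{2ww_z\phi_z v^{n-1}}{\phi^4}+\frac{w^2\phi_z^2 v^{n-1}}{\phi^5}-|A|^2w^2\frac{v^{n-1}}{\phi}+\frac{(n-1)w^2v^{n-3}}{\phi}
\]
vanishes.

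Integrate the first term by parts in $z$ using $2ww_z=(w^2)_z$; the boundary contribution vanishes because $\Si$ meets $\pa M$ orthogonally, so $v_z=0$ at $z=0$ and $z=d$ and hence $\phi_z=v_zv_{zz}/\phi=0$ there. After this step the integrand is $w^2$ times a function of $z$ alone, so it is enough to verify the pointwise identity
\[
\pa_z\!\left(\frac{\phi_z v^{n-1}}{\phi^4}\right)+\frac{\phi_z^2 v^{n-1}}{\phi^5}-|A|^2\frac{v^{n-1}}{\phi}+\frac{(n-1)v^{n-3}}{\phi}=0.
\]

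To prove this I would insert $|A|^2=\frac{v_{zz}^2}{\phi^6}+\frac{n-1}{v^2\phi^2}$, the sum of the squared principal curvatures of a rotationally symmetric hypersurface with profile $v$, expand the $z$-derivative, and simplify throughout using $\phi^2=1+v_z^2$ and $\phi\phi_z=v_zv_{zz}$. The expansion leaves one term containing $v_{zzz}$, which I would remove by differentiating the mean-curvature equation once, $\pa_z\!\big(-v_{zz}\phi^{-3}+(n-1)v^{-1}\phi^{-1}\big)=0$, and solving for $v_{zzz}$ in terms of $v$, $v_z$, $v_{zz}$. The main obstacle is the bookkeeping in this last step: after the substitution every term carrying a factor $v_{zz}^2$ has to cancel --- their combined coefficient is a multiple of $\phi^2-v_z^2-1$, which is zero --- but this cancellation is not visible until both the derivative has been fully expanded and the mean-curvature relation has been used, and this is the one and only place where the constant-mean-curvature hypothesis enters. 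The terms that survive are all proportional to $v^{n-3}$ and collapse, again via $v_z^2+1=\phi^2$, to $-\frac{(n-1)v^{n-3}(v_z^2+1)}{\phi^3}+\frac{(n-1)v^{n-3}}{\phi}=0$, which completes the proof.
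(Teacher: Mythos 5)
Your proposal is correct: the reduction to the pointwise identity $\pa_z\bigl(\phi_z v^{n-1}\phi^{-4}\bigr)+\phi_z^2v^{n-1}\phi^{-5}-|A|^2v^{n-1}\phi^{-1}+(n-1)v^{n-3}\phi^{-1}=0$ is valid (the boundary terms vanish since $v_z=0$ at $z=0,d$), and that identity does hold after eliminating $v_{zzz}$ via the differentiated CMC equation, with the cancellations occurring exactly as you describe. This is essentially the paper's own argument, which extracts the same total $z$-derivative $\pa_z\bigl(-v^{n-1}v_zv_{zz}w^2(1+v_z^2)^{-5/2}\bigr)$ instead of integrating by parts explicitly and likewise uses the differentiated mean-curvature equation to remove $v_{zzz}$ before recombining with $|A|^2$.
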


\begin{proof}
To perform the substitution $u=\frac{w}{\sqrt{1+v_z^2}}$ in $J_v(u)$, we start by calculating the $\frac{u_z^2u^{n-1}}{\sqrt{1+v_z^2}}$ term:
\begin{align*}
\frac{u_z^2u^{n-1}}{\sqrt{1+v_z^2}}=&\left(\frac{w_z^2}{1+v_z}-\frac{2v_zv_{zz}ww_z}{(1+v_z^2)^2}+\frac{v_z^2v_{zz}^2w^2}{(1+v_z^2)^3}\right)\frac{v^{n-1}}{\sqrt{1+v_z^2}}\\
=&\frac{\pa}{\pa z}\left(\frac{-v^{n-1}v_zv_{zz}w^2}{(1+v_z^2)^{\frac52}}\right)+\left(\frac{w_z^2}{1+v_z}+\frac{v_z^2v_{zz}^2w^2}{(1+v_z^2)^3}\right)\frac{v^{n-1}}{\sqrt{1+v_z^2}}\\
&+\left(\frac{(n-1)v_z^2v_{zz}w^2}{v(1+v_z^2)^2}+\frac{v_{zz}^2w^2}{(1+v_z^2)^2}+\frac{v_zv_{zzz}w^2}{(1+v_z^2)^2}-\frac{5v_z^2v_{zz}^2w^2}{(1+v_z^2)^3}\right)\frac{v^{n-1}}{\sqrt{1+v_z^2}}.
\end{align*}
Now we differentiate $\frac{-v_{zz}}{(1+v_z^2)^{\frac32}}+\frac{n-1}{v\sqrt{1+v_z^2}}=constant$ with respect to $z$ to obtain:
\[
\frac{-v_{zzz}}{(1+v_z^2)^{\frac32}}+\frac{3v_zv_{zz}^2}{(1+v_z^2)^{\frac52}}-\frac{(n-1)v_z}{v^2\sqrt{1+v_z^2}}-\frac{(n-1)v_zv_{zz}}{v(1+v_z^2)^{\frac32}}=0,
\]
so that
\begin{align*}
\frac{u_z^2u^{n-1}}{\sqrt{1+v_z^2}}=&\frac{\pa}{\pa z}\left(\frac{-v^{n-1}v_zv_{zz}w^2}{(1+v_z^2)^{\frac52}}\right)+\left(\frac{w_z^2}{1+v_z}-\frac{v_z^2v_{zz}^2w^2}{(1+v_z^2)^3}\right)\frac{v^{n-1}}{\sqrt{1+v_z^2}}\\
&+\left(\frac{v_{zz}^2w^2}{(1+v_z^2)^2}-\frac{(n-1)v_z^2w^2}{v^2(1+v_z^2)}\right)\frac{v^{n-1}}{\sqrt{1+v_z^2}}\\
=&\frac{\pa}{\pa z}\left(\frac{-v^{n-1}v_zv_{zz}w^2}{(1+v_z^2)^{\frac52}}\right)+\left(\frac{w_z^2}{1+v_z}+\frac{v_{zz}^2w^2}{(1+v_z^2)^3}-\frac{(n-1)v_z^2w^2}{v^2(1+v_z^2)}\right)\frac{v^{n-1}}{\sqrt{1+v_z^2}}.
\end{align*}

Using that for a rotationally symmetric hypersurface $|A|^2=\frac{n-1}{v^2(1+v_z^2)}+\frac{v_{zz}^2}{(1+v_z^2)^3}$,
\begin{align*}
\tilde{J}_v(w)=&\frac{\int_{\SS^{n-1}}\int_0^d \left(\frac{w_z^2}{1+v_z}-\frac{(n-1)(v_z^2+1)w^2}{v^2(1+v_z^2)}+\frac{1}{v^2}\|\tilde{\nabla} w\|_{\SS^{n-1}}^2\right)\frac{v^{n-1}}{\sqrt{1+v_z^2}}\,dz\,d\mu_{\SS^{n-1}}}{\int_{\SS^{n-1}}\int_0^d w^2\frac{v^{n-1}}{\sqrt{1+v_z^2}}\,dz\,d\mu_{\SS^{n-1}}}\\
=&\frac{\int_{\SS^{n-1}}\int_0^d \left(\|\nabla w\|_{\Si}^2-\frac{(n-1)w^2}{v^2} \right)\frac{v^{n-1}}{\sqrt{1+v_z^2}}\,dz\,d\mu_{\SS^{n-1}}}{\int_{\SS^{n-1}}\int_0^d w^2\frac{v^{n-1}}{\sqrt{1+v_z^2}}\,dz\,d\mu_{\SS^{n-1}}}.
\end{align*}
\end{proof}


\section{Existence of the Inverse Projection}\label{PsiSec}
In this section we prove Lemma \ref{PsiExist}. Fix $t>0$ and define $\Phi:X_1\times\RR\times X_t\to X_1\times\RR$ such that
\[
\Phi(\bar{u},r,u)=\left(W(1)[u]-\bar{u},\om_n\int_0^du(z)^n\,dz-V(r)\right)
\]
and consider it's linearisation with respect to $u$:
\[
D_u\Phi(\bar{u},r,u)[w]=\left(W(1)[w],n\om_n\int_0^dw(z)u(z)^{n-1}\,dz\right)
\]
If we evaluate at $(\bar{v}(\cdot;t),t,v(\cdot;t))$, noting that $\Phi(\bar{v}(\cdot;t),t,v(\cdot;t))=(0,0)$, we obtain
\[
D_u\Phi(\bar{v}(\cdot;t),t,v(\cdot;t))[w]=\left(W(1)[w],n\om_n\int_0^dv(z;t)^{n-1}\,dz(I-W(t))[w]\right).
\]
If $w$ is in the null space of $D_u\Phi(\bar{v}(\cdot;t),t,v(\cdot;t))$ then:
\[
W(1)[w]=0\ \text{and}\ (I-W(t))[w]=0.
\]
Therefore $w=(I-W(t))[w]+W(t)[w]=0+W(t)[W(1)[w]]=W(t)[0]=0$. So, $D_u\Phi(\bar{v}(\cdot;t),t,v(\cdot;t))$ is a Banach space isomorphism for any $t>0$ and the existence of $\psi_t$ follows from the implicit function theorem.

The formula for the derivative follows by taking the derivative of $\tilde{\Phi}_t(\bar{u},r)=\Phi(\bar{u},r,\psi_t(\bar{u},r))=(0,0)$ with respect to $\bar{u}$. That is:
\[
\left(W(1)[D_{\bar{u}}\psi_t(\bar{u},r)[\bar{w}]]-\bar{w},n\om_n\int_0^dD_{\bar{u}}\psi_t(\bar{u},r)[\bar{w}]\psi_t(\bar{u},r)^{n-1}\,dz\right)=(0,0).
\]
By the first condition $D_{\bar{u}}\psi_t(\bar{u},r)[\bar{w}]=\bar{w}+C$, and substitution into the second condition gives
\[
\int_0^d\bar{w}\psi_t(\bar{u},r)^{n-1}\,dz+C\int_0^d\psi_t(\bar{u},r)^{n-1}\,dz=0,
\]
resulting in the formula given.

\bibliographystyle{plain}

\begin{thebibliography}{10}

\bibitem{Athanassenas87}
M.~Athanassenas.
\newblock A variational problem for constant mean curvature surfaces with free
  boundary.
\newblock {\em J. Reine Angew. Math.}, 377:97--107, 1987.

\bibitem{Barbosa84}
J.L. Barbosa and M.~do~Carmo.
\newblock Stability of hypersurfaces with constant mean curvature.
\newblock {\em Math. Z.}, 185(3):339--353, 1984.

\bibitem{Delaunay41}
C.~Delaunay.
\newblock Sur la surface de r\'evolution dont la courbaure mayenne est
  constante.
\newblock {\em J. Math. Pures. Appl. S\'er. 1}, 6:309--314, 1841.

\bibitem{Hartley13}
D.~Hartley.
\newblock Motion by volume preserving mean curvature flow near cylinders.
\newblock {\em Comm. Anal. Geom.}, 21(5):873--889, 2013.

\bibitem{Hartley16}
D.~Hartley.
\newblock Stability of near cylindrical stationary solutions to weighted-volume
  preserving curvature flows.
\newblock {\em J. Geom. Anal.}, 26(3):2169--2203, 2016.

\bibitem{Hsiang81}
W.~Hsiang and W.C. Yu.
\newblock A generalization of a theorem of {D}elaunay.
\newblock {\em J. Differential Geom.}, 16(2):161--177, 1981.

\bibitem{Kato76}
T.~Kato.
\newblock {\em Perturbation theory for linear operators}.
\newblock Springer-Verlag, Berlin-New York, second edition, 1976.
\newblock Grundlehren der Mathematischen Wissenschaften, Band 132.

\bibitem{Kielhofer12}
H.~Kielh{\"o}fer.
\newblock {\em Bifurcation theory}, volume 156 of {\em Applied Mathematical
  Sciences}.
\newblock Springer, New York, second edition, 2012.
\newblock An introduction with applications to partial differential equations.

\bibitem{Li18}
H.~Li, Y.~Xia, and C.~Xiong.
\newblock Stability of unduloid bridges with free boundary in a euclidean slab.
\newblock {\em Sci. China Math.}, 61(5):917--928, 2018.

\bibitem{Pedrosa99}
R.~H.~L. Pedrosa and M.~Ritor{\'e}.
\newblock Isoperimetric domains in the {R}iemannian product of a circle with a
  simply connected space form and applications to free boundary problems.
\newblock {\em Indiana Univ. Math. J.}, 48(4):1357--1394, 1999.

\bibitem{Souam18}
R.~Souam.
\newblock Stable constant mean curvature surfaces with free boundary in slabs.
\newblock arXiv:1802.06848, 2018.

\bibitem{Vogel87}
T.I. Vogel.
\newblock Stability of a liquid drop trapped between two parallel planes.
\newblock {\em SIAM J. Appl. Math.}, 47(3):516--525, 1987.

\end{thebibliography}

\end{document}